\documentclass[11pt]{article}
\usepackage[a4paper, left=1.5cm, right=1.5cm, top=2.5cm, bottom=2.5cm, headsep=1.2cm]{geometry}               
\geometry{a4paper}                  

\usepackage[parfill]{parskip}   
\usepackage{graphicx}
\usepackage{amssymb}
\usepackage{color}
\usepackage{amsfonts}
\usepackage{amsmath} 
\usepackage{amsthm}
\usepackage{empheq}
\usepackage{esint}
\usepackage{epstopdf}
\usepackage{verbatim}
\usepackage{ifthen}
\usepackage{comment}
\usepackage{hyperref}
\usepackage{float}

\begin{document}

\title{A model of morphogen transport\\ in the presence of glypicans III}

\author{Marcin Ma\l ogrosz
\footnote{Institute of Applied Mathematics and Mechanics, University of Warsaw, Banacha 2, 02-097 Warsaw, Poland \newline (malogrosz@mimuw.edu.pl)}}
\date{}

\newtheorem{defi}{Definition}
\newtheorem{theo}{Theorem}
\newtheorem{lem}{Lemma}
\newtheorem{rem}{Remark}
\newcommand{\ov}[1]{\overline{#1}}
\newcommand{\un}[1]{\underline{#1}}
\newcommand{\bsym}[1]{\boldsymbol{#1}}
\newcommand{\n}[1]{\lVert#1\rVert}
\newcommand{\eq}[1]{\begin{align}#1\end{align}}

\providecommand{\bs}{\begin{subequations}}
\providecommand{\es}{\end{subequations}}

\maketitle

\begin{abstract}
\noindent
We analyse a stationary problem for the two dimensional model of morphogen transport introduced by Hufnagel et al. The model consists of one linear elliptic PDE posed on $(-1,1)\times(0,h)$  which is coupled via a nonlinear boundary condition with a nonlinear elliptic PDE posed on $(-1,1)\times\{0\}$. The main result is that the system has a unique steady state for all ranges of parameters present in the system. Moreover we consider the problem of the dimension reduction. After introducing an appropriate scaling in the model we prove that, as $h\to0$, the stationary solution converges to the unique steady state of the one dimensional simplification of the model which was analysed in the first part of the paper. The main difficulty in obtaining appropriate estimates stems from the presence of a measure source term in the boundary condition. 
%\noindent
\end{abstract}

\textbf{AMS classification} 35B40, 35Q92

\textbf{Keywords} morphogen transport, stationary problem, system of elliptic PDE's, singular boundary condition, uniqueness, dimension reduction

\section{Introduction}

Morphogens are signalling molecules which govern the process of cell differentiation in living organisms. They spread from a source spatially localised in the tissue and after a certain amount of time form stable gradients of concentrations. Then receptors located on the surfaces of the cells detect levels of those concentrations and through intracellular pathways this information is conveyed to the nuclei, where the process of gene expression is initiated (see \cite{Wol}). 

The exact mechanism of morphogen transport is still discussed in the literature (see \cite{KPBKBJG-G}, \cite{KW}, \cite{LNW} for modelling and \cite{KLW1}, \cite{KLW2}, \cite{Mal1}, \cite{STW}, \cite{Tel} for mathematical analysis). A model proposed in \cite{Huf} accounts for the transport of morphogen Wingless (Wg) in the imaginal wing disc of the Drosophila Melanogaster. The present paper is the third part of a series of papers where we analyse mathematical properties of this model, which we call \textbf{[HKCS]}. Model \textbf{[HKCS]} has two counterparts - two and one dimensional (denoted respectively \textbf{[HKCS].2D} and \textbf{[HKCS].1D}), depending on the dimension of the domain representing the imaginal wing disc. The main goal of our analysis is a rigourous justification of the so called dimension reduction - \textbf{[HKCS].1D} can be obtained from \textbf{[HKCS].2D} due to shrinking of the rectangular domain in the direction which corresponds to the thickness of the wing disc. 

Model \textbf{[HKCS].2D} accounts for the movement of morphogen molecules by (linear) diffusion in the whole domain $\Omega_h=(-1,1)\times(0,h)$, where $h<<1$ denotes thickness of the disc, while being secreted from a point source localised at $x=0$ on part of the boundary of the wing disc - $\partial_1\Omega_h=(-1,1)\times\{0\}$. Moreover  association-dissociation reactions of morphogen with receptors and glypicans localised on $\partial_1\Omega_h$ are taken under consideration. After association of morphogen with a receptor (resp. glypican) a morphogen-receptor (resp. -glypican) complex is being formed. Apart of the association-dissociation mechanism glypicans also pass among themselves morphogen molecules, which is realised by introducing diffusion of morphogen-glypican complexes along $\partial_1\Omega_h$. Finally morphogen-glypican complexes can further associate with free receptors creating a triple morphogen-glypican-receptor complexes (which are immotile, similarly as morphogen-receptor complexes). Model \textbf{[HKCS].1D} accounts for the same set of reactions between morphogen, glypican and complexes as \textbf{[HKCS].2D}. However it is assumed  that the imaginal wing disc is completely flat ($h=0$) so that the whole dynamics takes place only on $\partial_1\Omega_h$.

In \cite{Mal2} - the first part of our study, we proved that \textbf{[HKCS].1D} is globally well-posed and has a unique steady state.  Article \cite{Mal3} is devoted to the analysis of the evolutionary problem \textbf{[HKCS].2D}. Apart of proving global well-posedness we showed that time dependent solutions of \textbf{[HKCS].2D}, when properly normalised, converge as $h\to0$ to solutions of \textbf{[HKCS].1D}. In this paper we turn our attention to the stationary problem associated with \textbf{[HKCS].2D}. We prove that there is a unique steady state which converges to the equilibrium of \textbf{[HKCS].1D} as $h\to0$. We illustrate our result by performing numerical computations which show that the graph of the stationary solution to \textbf{[HKCS].2D} becomes homogeneous in $x_2$ direction as $h\to0$.
It is worth underlyining that all our results are proved without imposing any artifficial conditions on the parameters which are present in the system.
\\

\subsection{The [HKCS].2D model.}

In this section we recall model \textbf{[HKCS].2D} in a nondimensional form. The model was described in full detail and analysed for the evolutionary case in \cite{Mal3}. For the presentation and analysis of \textbf{[HKCS].1D} - a one dimensional simplification we refer to \cite{Mal2}. \textbf{[HKCS].2D} in a nondimensional form reads:

\bs\label{H2S:System}
\eq{
\partial_t u_{1}^h+div(J_h(u_{1}^h))&=-b_1u_{1}^h,&& (t,x)\in\Omega_{T}\label{H2S:SystemA}\\
\partial_t u_{2}^h-d\partial^2_{x_1} u_{2}^h&=c_1u_{1}^h-(b_2+c_2+c_3u_{3}^h)u_{2}^h+c_5u_{5}^h,&&(t,x)\in(\partial_1\Omega)_{T}\label{H2S:SystemB}\\
\partial_t u_{3}^h&=-(b_3+u_{1}^h+c_3u_{2}^h)u_{3}^h+c_4u_{4}^h+c_5u_{5}^h+p_3,&&(t,x)\in(\partial_1\Omega)_{T}\label{H2S:SystemC}\\
\partial_t u_{4}^h&=u_{1}^hu_{3}^h-(b_4+c_4)u_{4}^h,&&(t,x)\in(\partial_1\Omega)_{T}\label{H2S:SystemD}\\
\partial_t u_{5}^h&=c_3u_{2}^hu_{3}^h-(b_5+c_5)u_{5}^h,&&(t,x)\in(\partial_1\Omega)_{T}\label{H2S:SystemE}
}
\es
with boundary and initial conditions
\begin{align*}
-J_h(u_{1}^h)\nu&=0,&&(t,x)\in(\partial_0\Omega)_{T}\\
-J_h(u_{1}^h)\nu&=-(c_1+u_{3}^h)u_{1}^h+c_2u_{2}^h+c_4u_{4}^h+p_1\delta,&&(t,x)\in(\partial_1\Omega)_{T}\\
\partial_{x_1} u_{2}^h&=0,&&(t,x)\in(\partial\partial_1\Omega)_{T}\\
\boldsymbol{u}^h(0,\cdot)&=\boldsymbol{u}_0,
\end{align*}
where 
\begin{itemize}
\item $\Omega=(-1,1)\times(0,1),\ \partial\Omega=\partial_0\Omega\cup\partial_1\Omega, \ \partial_1\Omega=(-1,1)\times\{0\}, \ 0<T\leq\infty, \ \Omega_T=(0,T)\times \Omega$,
\item  $h>0$ corresponds to the thickness of the wing disc, $J_h(u)=-(\partial_{x_1}u,h^{-2}\partial_{x_2}u)$,
\item  $\nu$ denotes the outer normal unit vector to $\partial\Omega$,
\item $\delta$ denotes a one dimensional Dirac Delta i.e $\delta(\phi)=\phi(0)$ for any $\phi\in C([-1,1])$.
\end{itemize}

In \eqref{H2S:System} $u_1, u_2,u_3,u_4$ and $u_5$ denote concentrations of free morphogen, morphogen-glypican complexes, free receptors, morphogen-receptor complexes and morphogen-glypican-receptor complexes; $c_i$'s are rates of reactions between $u_i$'s; $b_1,b_2$ denote rates of degradations of $u_1,u_2$ while $b_3,b_4,b_5$ are rates of internalisation of $u_3,u_4$ and $u_5$. Finally $d$ is the diffusion rate of $u_2$ along $\partial_{1}\Omega$ while $p_1,p_3$ denote rates of production of morphogen and free receptors.  
Notice the dependence of $J_h(u)$ on parameter $h$.  
\\

From now on we impose the following natural assumptions on the signs of constant parameters
\begin{align}
d, \boldsymbol{b}>0,\  \boldsymbol{c},\boldsymbol{p}\geq 0,\label{H2S:ass}
\end{align}
where $\boldsymbol{b}=(b_1,\ldots,b_5)$ and similarly for $\boldsymbol{c},\boldsymbol{p}$.

\subsection{Notation}

In the whole article $\Omega=(-1,1)\times(0,1)$ and $I=(-1,1)$ are fixed domains. If $U$ is an open subset of $\mathbb{R}^n$  and $1\leq p\leq \infty, \ s\in\mathbb{R}$, we denote by $W^{s}_p(U)$ the fractional Sobolev (also known as Sobolev-Slobodecki) spaces and by $\mathcal{M}(U)$  the Banach space of finite, signed Radon measures on $\overline{U}$  equipped with the total variation norm - $\n{\cdot}_{TV}$.\\

In various estimates we will use a generic constant $C$ which may take different values even in the same paragraph. Constant $C$ may depend on various parameters, but it will never depend on $h$. \\
If $X$ is a normed space we denote by $X^*$ its topological dual. Furthermore if $x\in X$ and $x^*\in X^*$ we denote by $\Big<x^*,x\Big>_{(X^*,X)}=x^*(x)$ a natural pairing between $X$ and its dual. If $H$ is a Hilbert space we denote by $(.|.)_H$ its scalar product. In particular $(\bsym{x}|\bsym{y})_{\mathbb{R}^n}=\sum_{i=1}^nx_iy_i$ and $(f|g)_{L_2(U)}=\int_{U}fg$. To get more familiar with the notation observe that, due to Riesz theorem, for any $x^*\in H^*$ there exists a unique $x\in H$ such that $\Big<x^*,y\Big>_{(H^*,H)}=(x|y)_H$ for all $y\in H$.

\section{Results}

Let us observe that due to the presence of three ODE's in the system \eqref{H2S:System}, the stationary problem may be reduced to a system of two elliptic equations:
\bs\label{H2S:SS2D}
\eq{
div(J_h(u_1))+b_1u_1&=0,&& x\in\Omega\label{H2S:SS2DA}\\
-d\partial^2_{x_1}u_2-c_1u_1+(b_2+c_2+k_2H(u_1,u_2))u_2&=0,&& x\in\partial_1\Omega\label{H2S:SS2DB}
}
\es
with boundary conditions
\bs\label{H2S:SS2Dbcd}
\eq{
-J_h(u_1)\nu&=0,&&x\in\partial_0\Omega\\
-J_h(u_1)\nu&=-(c_1+k_1H(u_1,u_2))u_1+c_2u_2+p_1\delta,&&x\in\partial_1\Omega\\
\partial_{x_1} u_2&=0,&&x\in\partial\partial_1\Omega,
}
\es
where
\begin{align}
k_1=b_4/(b_4+c_4), \ k_2=c_{3}b_5/(b_5+c_5), \ H(u_1,u_2)=p_3/(k_1u_1+k_2u_2+b_3)\label{H2S:Hdef}
\end{align}
and
\begin{align*}
u_3=H(u_1,u_2), \ u_4=\frac{k_1}{b_4}u_1H(u_1,u_2), \ u_5=\frac{k_2}{b_5}u_2H(u_1,u_2).
\end{align*}

%We shall identify $I=(-1,1)$ with $\partial_1 \Omega$ in an obvious way. 

We will prove the following two theorems
\newline
\begin{theo}\label{H2S:SStheo1}
For every $h\in(0,1]$ system \eqref{H2S:SS2D}-\eqref{H2S:SS2Dbcd} has a unique nonnegative $W^1_1$ solution $(u_{1}^{h},u_{2}^{h})$ i.e. there exists a unique nonnegative $(u_{1}^{h},u_{2}^{h})\in W^1_1(\Omega)\times W^1_1(\partial_1\Omega)$ such that for every $(v_1,v_2)\in W^1_{\infty}(\Omega)\times W^1_{\infty}(\partial_1\Omega)$ 
\bs\label{H2S:weak}
\eq{
-\int_{\Omega}[J_h(u_1)\nabla v_1+b_1u_1v_1]&=p_1v_1(0)+\int_{\partial_1\Omega}[-(c_1+k_1H(u_1,u_2))u_1+c_2u_2]v_1,\\
\int_{\partial_1\Omega}d\partial_{x_1}u_2\partial_{x_1}v_2&=\int_{\partial_1\Omega}[c_1u_1-(b_2+c_2+k_2H(u_1,u_2))u_2]v_2.
}
\es
Moreover $(u_{1}^{h},u_{2}^{h})\in W^1_p(\Omega)\times W^2_q(\partial_1\Omega)$ for every $1\leq p<2, 1\leq q<\infty$  and
\begin{align}
\n{u_{1}^{h}}_{W^1_p(\Omega)}+h^{-1}\n{\partial_{x_2}u_{1}^{h}}_{L_p(\Omega)}+\n{u_{2}^{h}}_{W^2_q(\partial_1\Omega)}\leq C,\label{H2S:estimate}
\end{align}
where $C$ does not depend on $h$.
\end{theo}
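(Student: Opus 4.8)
The plan is to establish existence via a fixed-point argument built around the linear theory, then extract the uniform-in-$h$ estimates, and finally prove uniqueness by a monotonicity/comparison argument exploiting the structure of the nonlinearity $H$. The central observation is that $H(u_1,u_2)=p_3/(k_1u_1+k_2u_2+b_3)$ is bounded (between $0$ and $p_3/b_3$) and, crucially, is \emph{decreasing} in each argument on the nonnegative cone, which makes the reaction terms $-k_1 H(u_1,u_2)u_1$ and $-k_2H(u_1,u_2)u_2$ behave like dissipative absorption terms. I would begin by setting up the solution map $\mathcal{T}\colon(w_1,w_2)\mapsto(u_1,u_2)$ where, given nonnegative data $(w_1,w_2)$, one solves the \emph{linear} decoupled system obtained by freezing $H(w_1,w_2)$ in the coefficients. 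The first equation is a linear elliptic Neumann/flux problem for $u_1$ on $\Omega$ with a Dirac source $p_1\delta$ on $\partial_1\Omega$; the second is a one-dimensional linear elliptic problem for $u_2$ on $\partial_1\Omega$ with Neumann boundary data. Both are solvable by standard variational/transposition arguments, and the zeroth-order coefficients $b_1>0$, $b_2+c_2+k_2H\geq b_2>0$ guarantee coercivity and hence well-posedness.

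The key technical point is the treatment of the measure source $p_1\delta$ in the flux condition for $u_1$. Because the right-hand side is only a measure, one cannot expect $u_1\in H^1$; instead I would work in $W^1_p(\Omega)$ for $p<2$, using the fact that in two dimensions the Green's function / fundamental solution for the elliptic operator with a point source on the boundary lies in $W^1_p$ precisely for $p<2$. Concretely, I would define $u_1$ by transposition (the very weak formulation \eqref{H2S:weak} tested against $W^1_\infty$ functions), obtain an a priori bound in $W^1_p(\Omega)$ from the duality estimate against the fundamental solution, and then bootstrap: once $u_1$ is controlled in $L_q(\partial_1\Omega)$ via the trace theorem, the one-dimensional equation for $u_2$ gains regularity, yielding $u_2\in W^2_q(\partial_1\Omega)$ for all finite $q$ by elliptic regularity for the interval. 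I would verify that $\mathcal{T}$ maps a suitable closed convex bounded set of nonnegative functions into itself (nonnegativity follows from the maximum principle applied to each linear problem, since the sources $p_1\delta\geq0$, $c_2u_2\geq0$, $c_1u_1\geq0$ are nonnegative and the coefficients are of the right sign), and that $\mathcal{T}$ is compact (by the compact embeddings $W^2_q(\partial_1\Omega)\hookrightarrow\hookrightarrow W^1_q$ and $W^1_p(\Omega)\hookrightarrow\hookrightarrow L_p$) and continuous. Schauder's fixed point theorem then delivers a solution.

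The uniform estimate \eqref{H2S:estimate}, including the weighted term $h^{-1}\n{\partial_{x_2}u_1^h}_{L_p(\Omega)}$, is where the genuine difficulty lies, and I expect this to be the main obstacle. The naive bounds degenerate as $h\to0$ because $J_h$ contains the factor $h^{-2}\partial_{x_2}$. The remedy is to test the weak formulation with well-chosen functions and exploit the anisotropic structure directly: multiplying \eqref{H2S:SS2DA} by $u_1$ itself and integrating yields $\int_\Omega|\partial_{x_1}u_1|^2+h^{-2}\int_\Omega|\partial_{x_2}u_1|^2+b_1\int_\Omega u_1^2$ controlled by the boundary terms, but this is only an $L_2$-type bound and must be reconciled with the measure source. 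I would instead derive the $W^1_p$ bound ($p<2$) by duality/transposition against solutions of the adjoint anisotropic problem, tracking the $h$-dependence carefully so that the dual estimates remain uniform; the total-variation bound on the measure and the fact that the source is localised at a single point keep the constant independent of $h$. The weighted gradient bound $h^{-1}\n{\partial_{x_2}u_1^h}_{L_p}$ encodes exactly the flattening phenomenon (the $x_2$-derivative vanishing in the limit) and is precisely what one needs later to pass to the $h\to0$ limit toward \textbf{[HKCS].1D}.

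Finally, for uniqueness I would take two nonnegative solutions $(u_1,u_2)$ and $(\tilde u_1,\tilde u_2)$, subtract the weak formulations, and test the difference against itself. The linear diffusion and degradation terms produce nonnegative definite contributions; the delicate terms are the differences of the nonlinear reaction terms involving $H$. Here the monotonicity of $H$ is decisive: writing $H(u_1,u_2)u_1-H(\tilde u_1,\tilde u_2)\tilde u_1$ and using that $u\mapsto H(u,\cdot)u$ together with the coupling terms forms a monotone operator on the nonnegative cone, one obtains that the quadratic form in $(u_1-\tilde u_1,u_2-\tilde u_2)$ has a sign forcing the difference to vanish. I would be careful to check that the cross-terms coming from the coupling $c_1u_1$ in the second equation and $c_2u_2$ in the boundary condition for the first can be absorbed using the coercive $b_1,b_2$ contributions (possibly after a Young-type splitting), so that no smallness or artificial restriction on the parameters $\boldsymbol{b},\boldsymbol{c},\boldsymbol{p}$ is required, consistent with the paper's stated goal.
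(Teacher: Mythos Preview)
Your existence strategy (freeze $H$, solve a linear problem, apply Schauder) matches the paper's. Two differences worth noting: the paper does \emph{not} decouple the linearised system --- it keeps the coupling $c_2u_2$ in the flux condition for $u_1$ and $c_1u_1$ in the equation for $u_2$ and solves the resulting linear \emph{system} in one step via a lemma (Lemma~\ref{H2S:bvp2lem}) whose hypothesis is the diagonal dominance $a_{11}\ge|a_{21}|$, $a_{22}\ge|a_{12}|$. Second, the uniform-in-$h$ $W^1_p$ bound (including the anisotropic piece $h^{-1}\n{\partial_{x_2}u_1}_{L_p}$) is obtained not by duality but by the Boccardo--Gallou\"et truncation technique: testing with $\varphi_n(u)$ and $\psi_n(u)$ and working with $m_h(u)=\sqrt{|\partial_{x_1}u|^2+h^{-2}|\partial_{x_2}u|^2}$, which delivers the weighted estimate directly. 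Your duality idea could in principle be made to work but would require constructing anisotropic dual solutions with $h$-independent $W^1_\infty$ bounds, which is not obviously easier.

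The genuine gap is in uniqueness. Your plan to ``subtract the weak formulations and test the difference against itself'' cannot be carried out as stated: the solutions lie only in $W^1_p(\Omega)$ for $p<2$ (this is forced by the Dirac source), while the weak formulation \eqref{H2S:weak} admits only $W^1_\infty$ test functions. So $z_1=u_1-\tilde u_1$ is not an admissible test function, and the energy identity you want is not available. Moreover, even formally, your hope of absorbing the cross terms $c_1,c_2$ into the $b_1,b_2$ coercivity ``possibly after a Young-type splitting'' would generically impose a smallness condition on the parameters, contradicting the claim that no restriction is needed.

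The paper's route avoids both obstacles. One writes the nonlinear differences explicitly,
\[
k_1\bigl(H(u)u_1-H(v)v_1\bigr)=\frac{p_3}{D}\bigl[(k_1k_2w_2+k_1b_3)z_1-k_1k_2w_1z_2\bigr],
\]
and similarly for the second component, with $D=(k_1u_1+k_2u_2+b_3)(k_1v_1+k_2v_2+b_3)$ and $w_i=(u_i+v_i)/2$. This casts $(z_1,z_2)$ as a $W^1_1$ solution of a \emph{linear} system of the form treated in Lemma~\ref{H2S:bvp2lem} with zero data. The crucial observation is that the resulting coefficients satisfy the diagonal dominance \eqref{H2S:dominant} \emph{exactly} (the extra $k_1p_3b_3/D$ on the diagonal does the job), for all parameter values. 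Uniqueness then follows from the linear lemma, whose own uniqueness proof bootstraps the $W^1_1$ solution to $W^1_2$ and invokes Lax--Milgram. The algebraic rewriting plus diagonal dominance is the missing idea in your proposal.
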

\

\begin{theo}\label{H2S:SStheo2}
Let $(u_{1}^{h},u_{2}^{h})$ be the unique solution of system \eqref{H2S:SS2D}-\eqref{H2S:SS2Dbcd}. Then for every $1\leq p<2, 1\leq q<\infty$ we have the following weak convergence as $h\to0^+$
\bs\label{H2S:conv}
\eq{
u_{1}^{h}&\rightharpoonup u_{1}^{0}\quad {\rm in} \quad W^1_p(\Omega),\\
u_{2}^{h}&\rightharpoonup u_{2}^{0}\quad {\rm in} \quad W^2_q(\partial_1\Omega).
}
\es
Moreover $\partial_{x_2}u_{1}^{0}=0$ (so that $u_{1}^{0}$ depends only on $x_1$) and $(u_{1}^{0},u_{2}^{0})\in W^1_{\infty}(I)\times C^2(\ov{I})$ is the unique solution of
\bs\label{H2S:SS1D}
\eq{
-{u_1}''+(b_1+c_1+k_1H(u_1,u_2))u_1-c_2u_2&=p_1\delta,&&x\in I\label{H2S:SS1DA}\\
-d{u_2}''-c_1u_1+(b_2+c_2+k_2H(u_1,u_2))u_2&=0,&&x\in I\label{H2S:SS1DB}\\
{u_1}'&={u_2}'=0,&&x\in\partial I.
}
\es
\end{theo}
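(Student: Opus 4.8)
# Proof Proposal for Theorem 2

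The plan is to exploit the uniform-in-$h$ estimate \eqref{H2S:estimate} from Theorem \ref{H2S:SStheo1} to extract weak limits, identify the limit equation, and then invoke uniqueness for the limit system. The key observation driving the whole argument is that the bound $h^{-1}\n{\partial_{x_2}u_1^h}_{L_p(\Omega)}\leq C$ forces the $x_2$-derivative to vanish in the limit, which is exactly what collapses the two-dimensional problem onto $I$.

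First I would set up the compactness argument. Since the norms $\n{u_1^h}_{W^1_p(\Omega)}$ and $\n{u_2^h}_{W^2_q(\partial_1\Omega)}$ are bounded uniformly in $h$ by \eqref{H2S:estimate}, and these spaces are reflexive for $1<p<2$ and $1<q<\infty$, I can extract a subsequence $h_n\to0^+$ along which $u_1^{h_n}\rightharpoonup u_1^0$ in $W^1_p(\Omega)$ and $u_2^{h_n}\rightharpoonup u_2^0$ in $W^2_q(\partial_1\Omega)$. From $h^{-1}\n{\partial_{x_2}u_1^h}_{L_p}\leq C$ I get $\n{\partial_{x_2}u_1^h}_{L_p}\leq Ch\to0$, so $\partial_{x_2}u_1^0=0$ and $u_1^0$ depends only on $x_1$; hence I may regard $u_1^0\in W^1_p(I)$. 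To pass to the limit in the \emph{nonlinear} Hill term $H(u_1^h,u_2^h)$ I would upgrade weak to strong convergence via compact embedding: $W^1_p(\Omega)\hookrightarrow\hookrightarrow L_p(\Omega)$ and the trace $W^1_p(\Omega)\to L_p(\partial_1\Omega)$ is compact, while $W^2_q(\partial_1\Omega)\hookrightarrow\hookrightarrow C(\ov I)$ for $q>1$. Thus (along the subsequence) $u_1^{h_n}\to u_1^0$ and $u_2^{h_n}\to u_2^0$ strongly in $L_p(\partial_1\Omega)$ and uniformly respectively, and since $H$ is a smooth bounded function of its arguments on the nonnegative orthant (the denominator is bounded below by $b_3>0$), $H(u_1^{h_n},u_2^{h_n})\to H(u_1^0,u_2^0)$ in, say, $L_r(\partial_1\Omega)$ for every finite $r$.

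Next I would pass to the limit in the weak formulation \eqref{H2S:weak}. Take any test function $v_1\in W^1_\infty(\Omega)$; the term $\int_\Omega J_h(u_1^h)\nabla v_1$ expands as $-\int_\Omega\partial_{x_1}u_1^h\,\partial_{x_1}v_1-h^{-2}\int_\Omega\partial_{x_2}u_1^h\,\partial_{x_2}v_1$, and the second piece is controlled since $h^{-2}\n{\partial_{x_2}u_1^h}_{L_p}\n{\partial_{x_2}v_1}_{L_{p'}}\leq Ch^{-1}\n{\partial_{x_2}v_1}_{L_{p'}}$ — this does \emph{not} obviously vanish, so the honest route is to first restrict to test functions independent of $x_2$ (which is legitimate, and for which the $x_2$-term drops entirely) to identify the limit equation, and separately argue that the limit $u_1^0$ solves the one-dimensional weak problem. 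For such $v_1=v_1(x_1)$ the bulk term becomes $\int_\Omega\partial_{x_1}u_1^h\,\partial_{x_1}v_1+b_1\int_\Omega u_1^hv_1\to\int_\Omega\partial_{x_1}u_1^0\,\partial_{x_1}v_1+b_1\int_\Omega u_1^0 v_1$; since both $u_1^0$ and $v_1$ are $x_2$-independent and $\Omega$ has unit height in $x_2$, these reduce to integrals over $I$. The source term $p_1v_1(0)$ and the trace terms on $\partial_1\Omega$ pass to the limit using the strong convergences just established. Combining the resulting limit of the first equation of \eqref{H2S:weak} with the second equation (which lives entirely on $\partial_1\Omega$ and passes to the limit directly), and noting that in one dimension the trace of $u_1^0$ on $\partial_1\Omega$ equals $u_1^0$ itself, I recover precisely the weak form of \eqref{H2S:SS1D}: the $-u_1''+(b_1+c_1+k_1H)u_1-c_2u_2=p_1\delta$ equation emerges from folding the bulk degradation $b_1$ and the boundary reaction into a single one-dimensional balance, with the Neumann conditions $u_1'=u_2'=0$ at $\partial I$ arising naturally from the variational formulation.

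The main obstacle I anticipate is precisely the step of showing that the limiting $u_1^0$, initially only known to be $x_2$-independent and to satisfy the two-dimensional weak identity tested against $x_2$-independent functions, genuinely solves the one-dimensional equation \eqref{H2S:SS1DA} \emph{with the correct coefficient on the $u_1$ term} — i.e. that the bulk term $-u_1''+b_1u_1$ on $\Omega$ correctly combines with the boundary flux $-(c_1+k_1H)u_1+c_2u_2$ to produce the single 1D equation with coefficient $(b_1+c_1+k_1H)$. This requires carefully integrating the 2D equation over the $x_2$-fiber and using the $h\to0$ limit of the normal-flux boundary condition, which is the heart of the dimension reduction and where the measure source $p_1\delta$ must be handled with the test function evaluated at the single point $x_1=0$. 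Once the limit pair $(u_1^0,u_2^0)$ is identified as a nonnegative $W^1_\infty(I)\times C^2(\ov I)$ solution of \eqref{H2S:SS1D}, I would invoke the uniqueness result for \textbf{[HKCS].1D} established in \cite{Mal2}: since every subsequential limit is the \emph{same} unique solution, the whole family $u^h$ (not merely a subsequence) converges, completing the proof. The regularity $u_1^0\in W^1_\infty(I)$ and $u_2^0\in C^2(\ov I)$ would follow a posteriori from the one-dimensional elliptic equations via standard bootstrapping, the measure right-hand side in \eqref{H2S:SS1DA} producing at most a Lipschitz kink in $u_1^0$ at the origin.
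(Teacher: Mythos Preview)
Your proposal is correct and follows essentially the same route as the paper: uniform bounds from Theorem~\ref{H2S:SStheo1} give weak subsequential limits by reflexivity, the estimate $h^{-1}\n{\partial_{x_2}u_1^h}_{L_p}\leq C$ forces $\partial_{x_2}u_1^0=0$, compact trace embeddings upgrade to strong convergence on $\partial_1\Omega$ so that the nonlinear $H$ term passes to the limit, and uniqueness of the 1D steady state from \cite{Mal2} turns subsequential into full convergence.

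Two minor remarks. First, you are in fact more careful than the paper about the $h^{-2}\partial_{x_2}u_1^h\,\partial_{x_2}v_1$ term: the paper writes the weak identity with $v_1\in C^1(\overline{\Omega})$ but silently drops this term, whereas your restriction to $x_2$-independent test functions is the honest way to make it disappear. Second, the ``main obstacle'' you anticipate---folding the bulk term $b_1u_1$ and the boundary flux $-(c_1+k_1H)u_1+c_2u_2$ into the single 1D equation---is actually immediate once you have restricted to $v_1=v_1(x_1)$: since $u_1^0$ is $x_2$-independent and the $x_2$-fiber of $\Omega$ has unit length, $\int_\Omega$ becomes $\int_I$, the boundary integral over $\partial_1\Omega$ is already $\int_I$, and the two simply add to give the weak form of \eqref{H2S:SS1DA} with coefficient $b_1+c_1+k_1H$. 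No separate fiber-integration argument is needed.
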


\begin{rem}
Notice that \eqref{H2S:SS1D} is the stationary problem associated with model \textbf{[HKCS].1D} (analysed previously in \cite{Mal2}). Thus Theorem \eqref{H2S:SStheo2} is the rigourous formulation of the dimension reduction of the model \textbf{[HKCS].2D} in the stationary case.
\end{rem}

On Figure \eqref{H2S:wykres} we present graphs of $u_{1}^{h}$ for several values of $h$.  Notice that as $h$ becomes smaller the graph of $u_{1}^{h}$ becomes homogeneous in the $x_2$ direction. 
\\
\

\begin{comment}
\begin{figure}[h]
\begin{center}
\includegraphics[height=1.3in, trim=65 0 0 0]{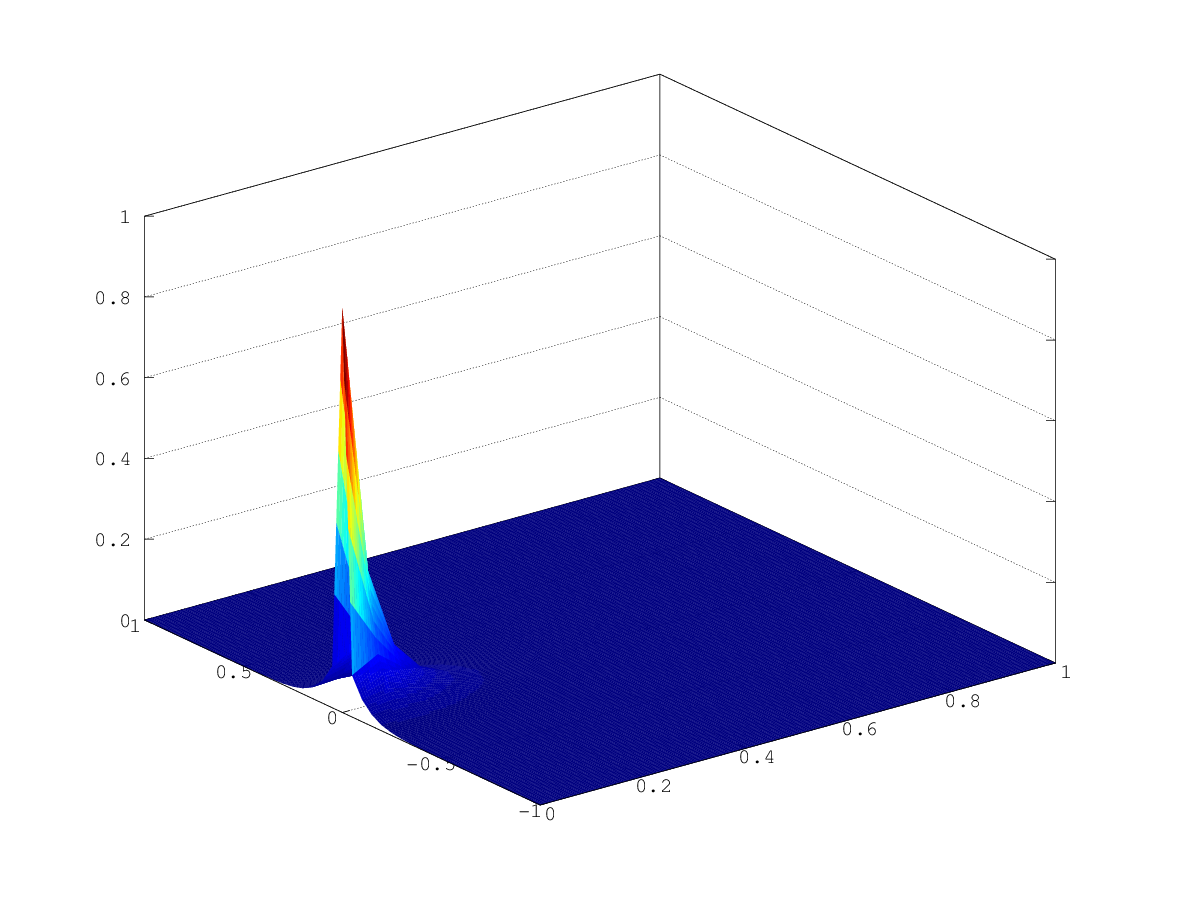}
\includegraphics[height=1.3in, trim=50 0 0 0]{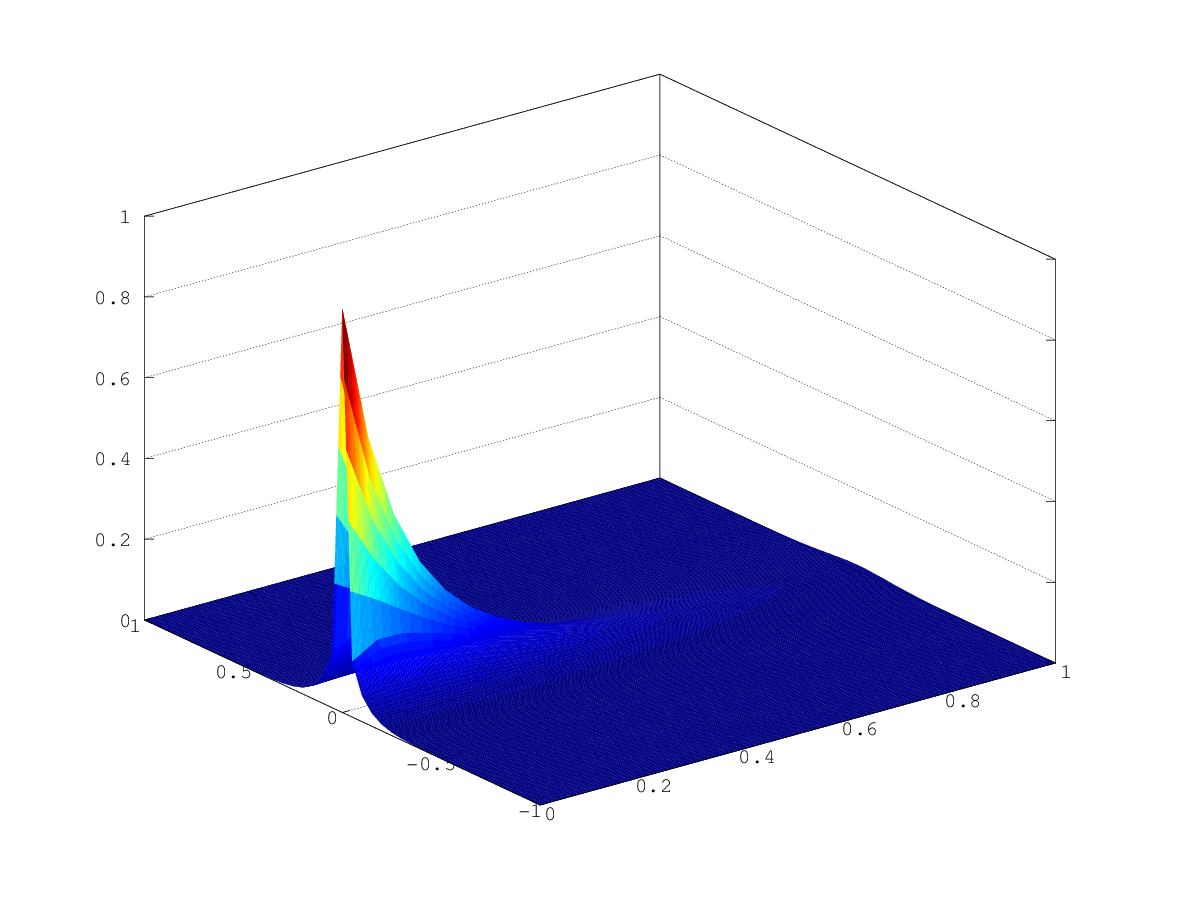}
\includegraphics[height=1.3in, trim=50 0 0 0]{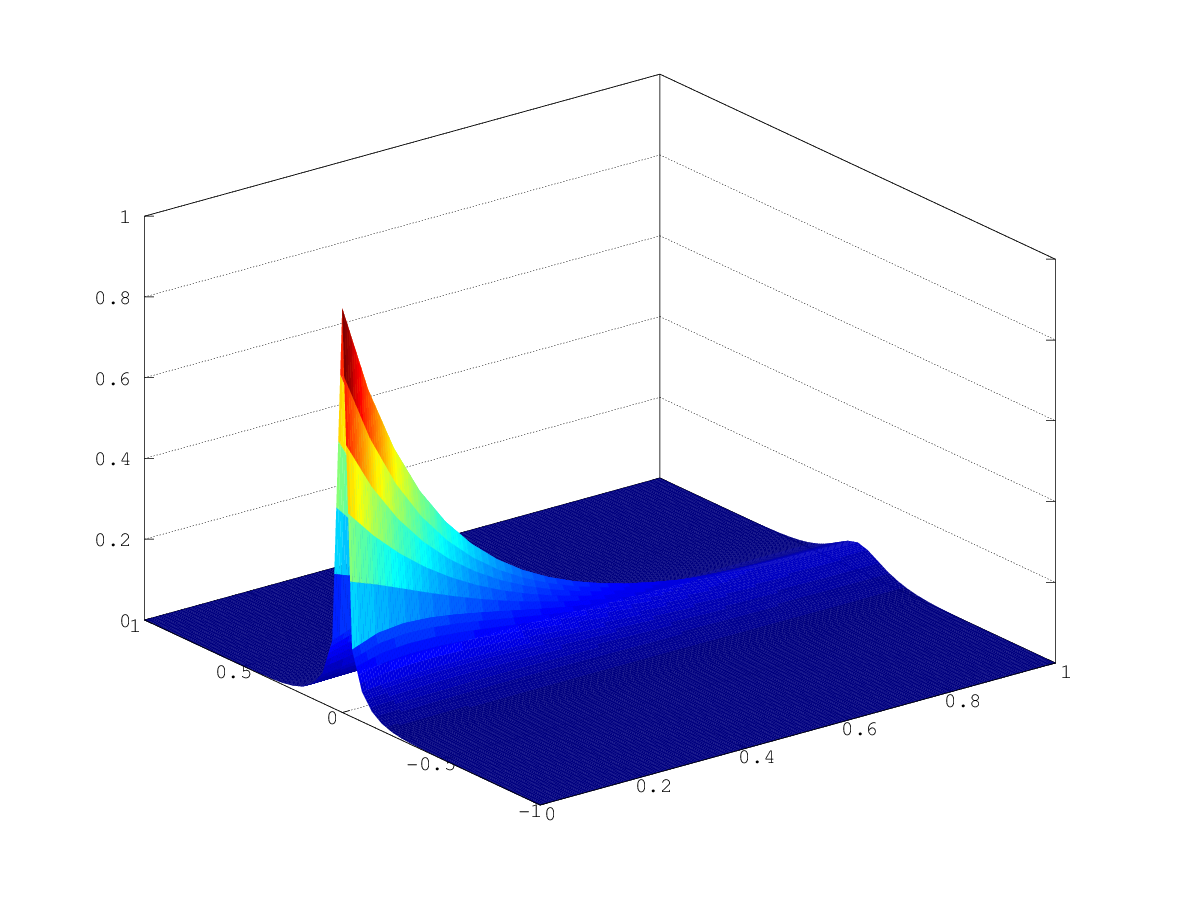}
\includegraphics[height=1.3in, trim=65 0 0 0]{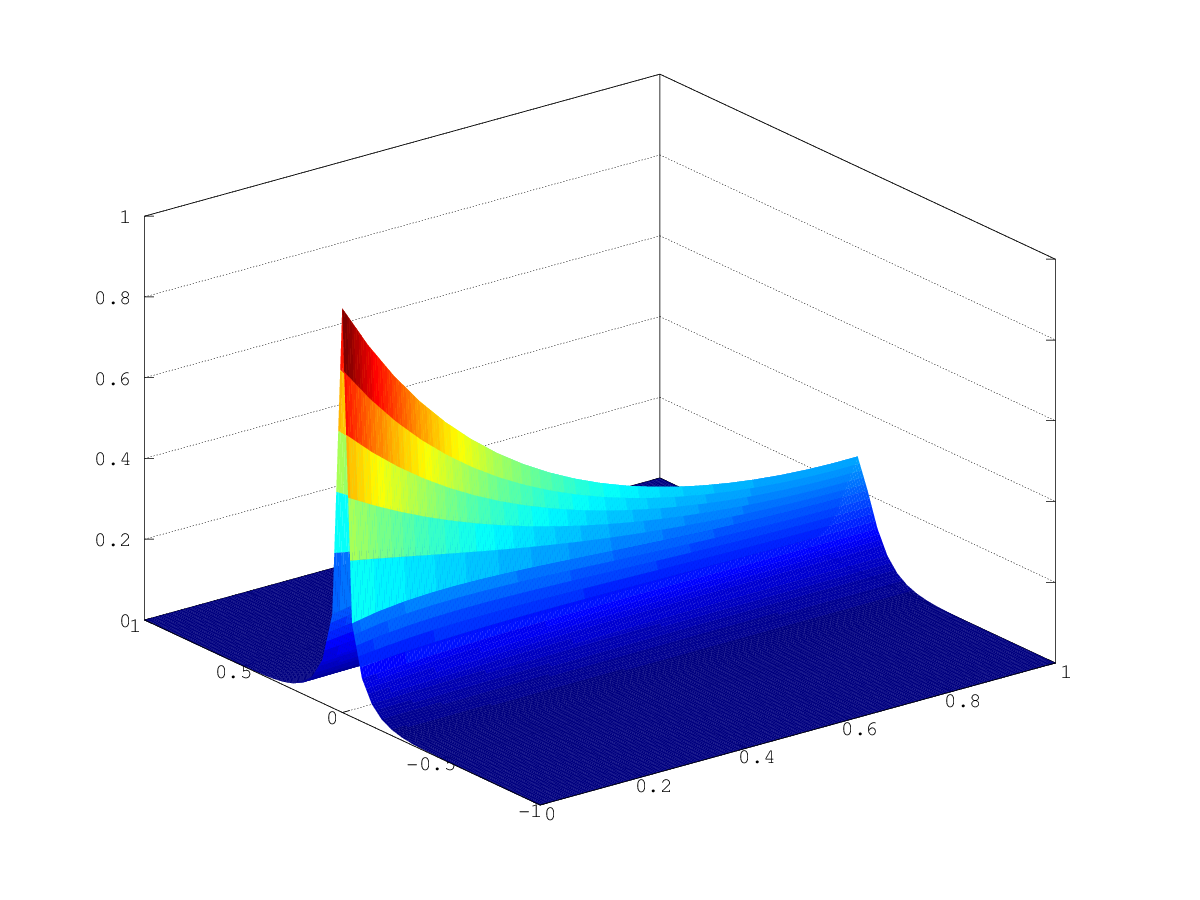}
\end{center}
\begin{center}
\includegraphics[height=1.3in, trim=50 0 0 0]{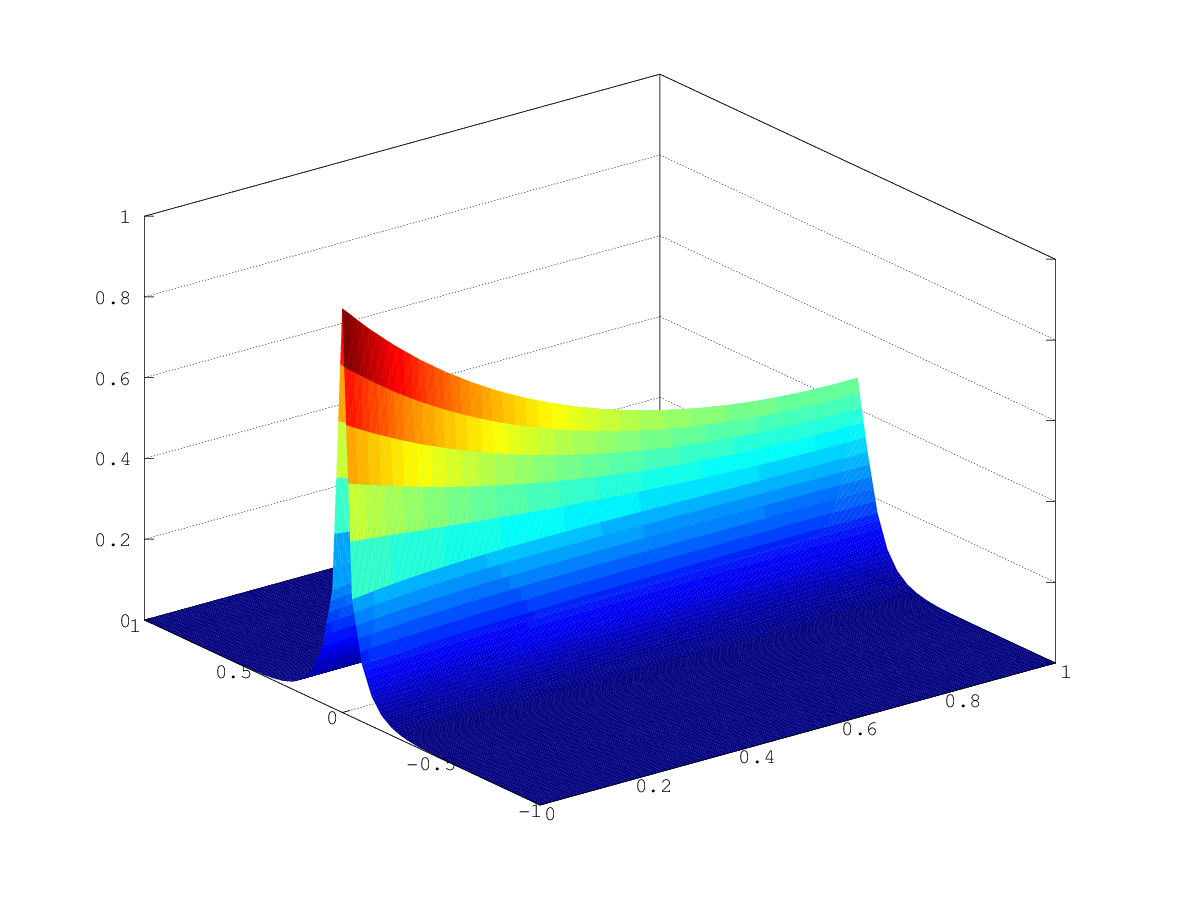}
\includegraphics[height=1.3in, trim=50 0 0 0]{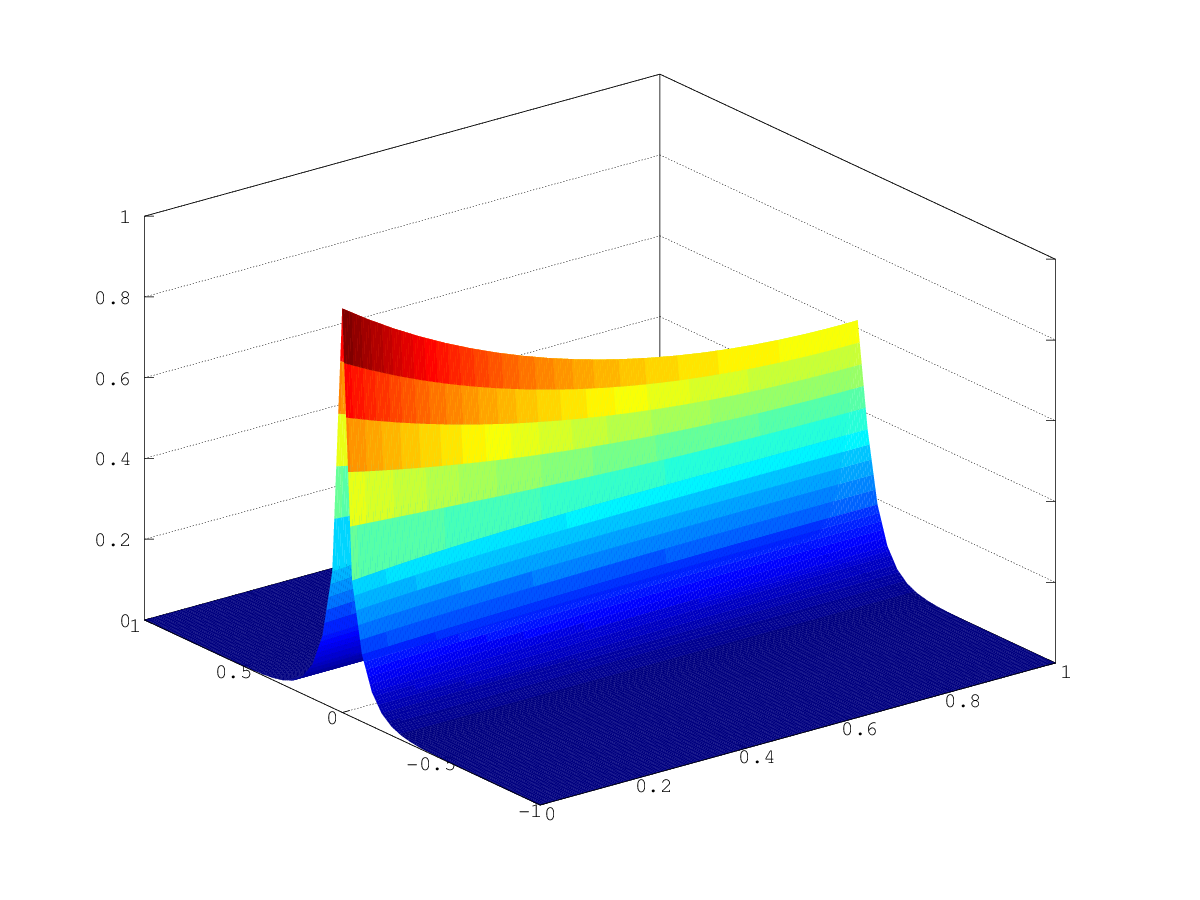}
\includegraphics[height=1.3in, trim=65 0 0 0]{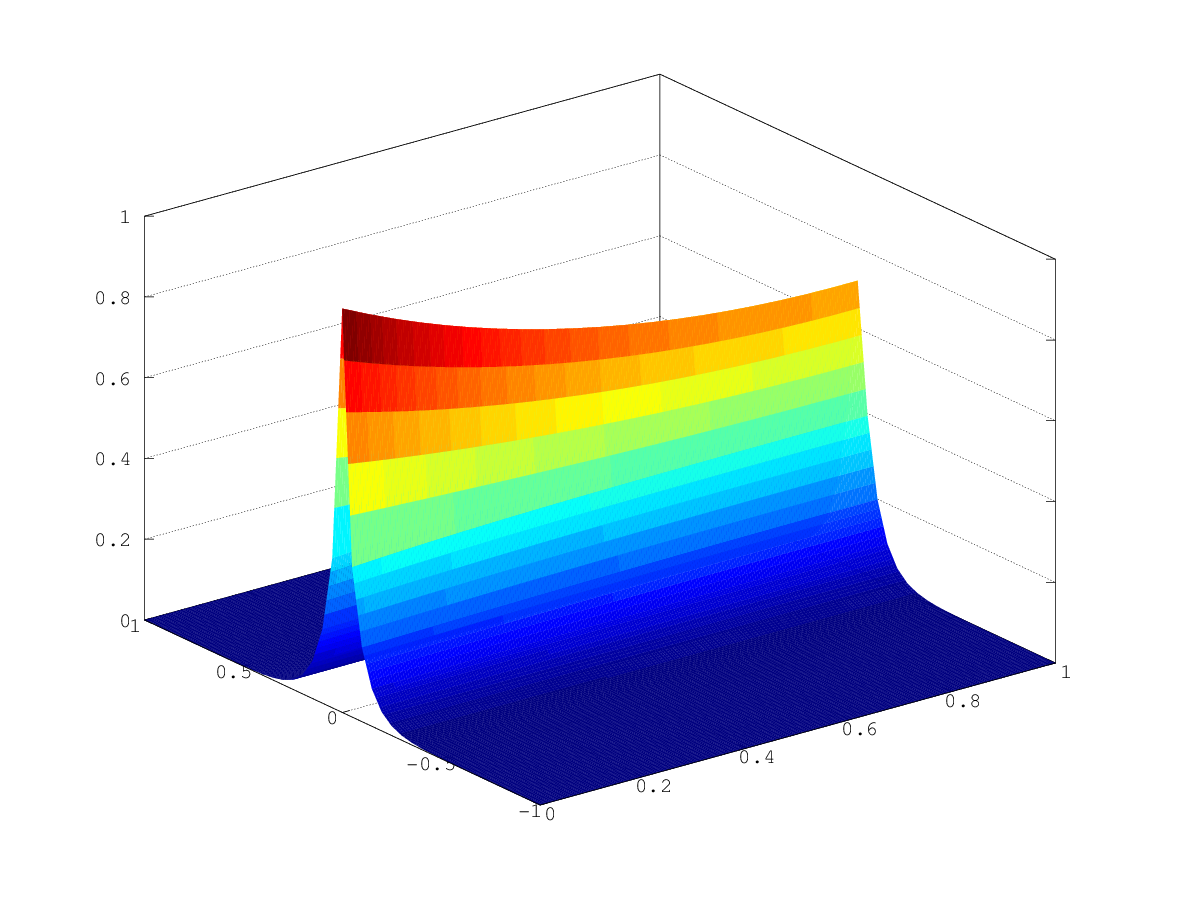}
\includegraphics[height=1.3in, trim=50 0 0 0]{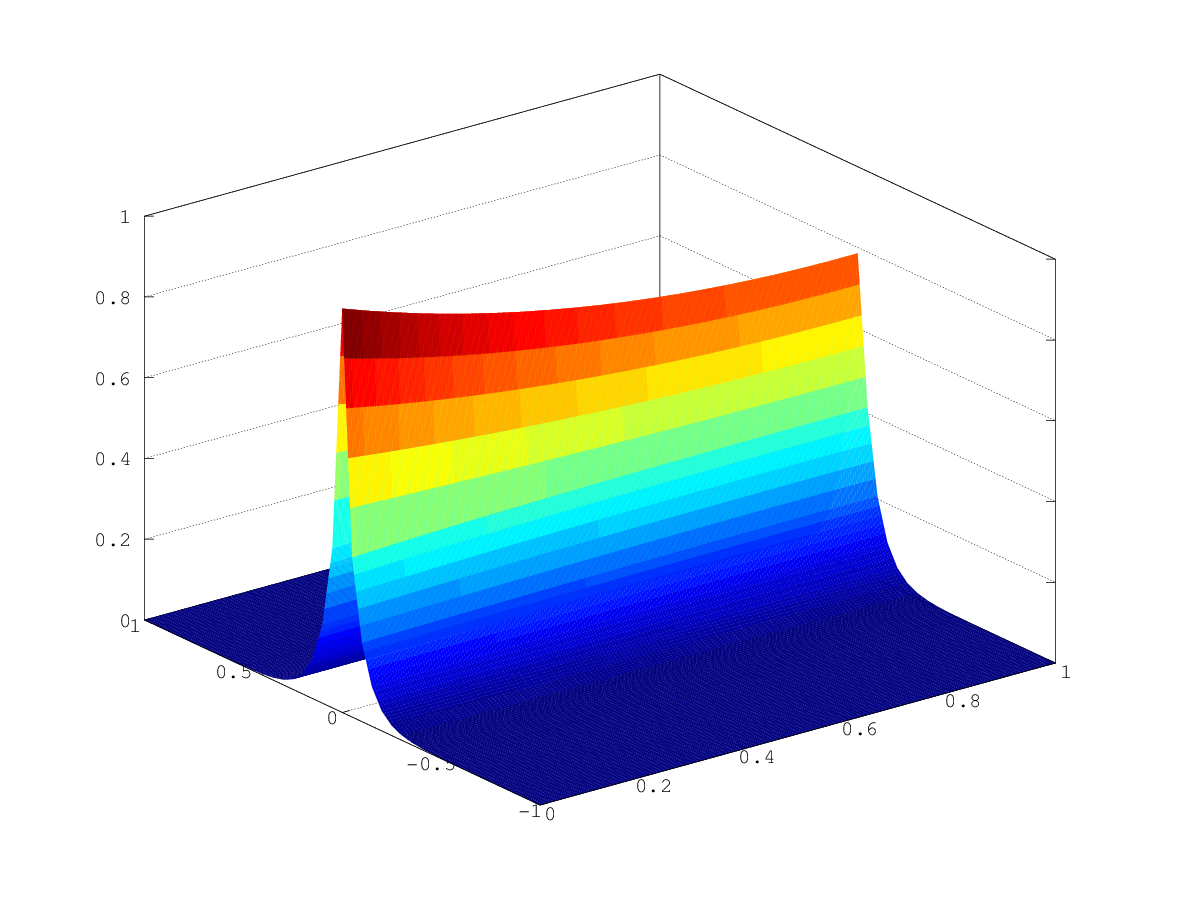}
\end{center}
\begin{center}
\includegraphics[height=1.3in, trim=50 0 0 0]{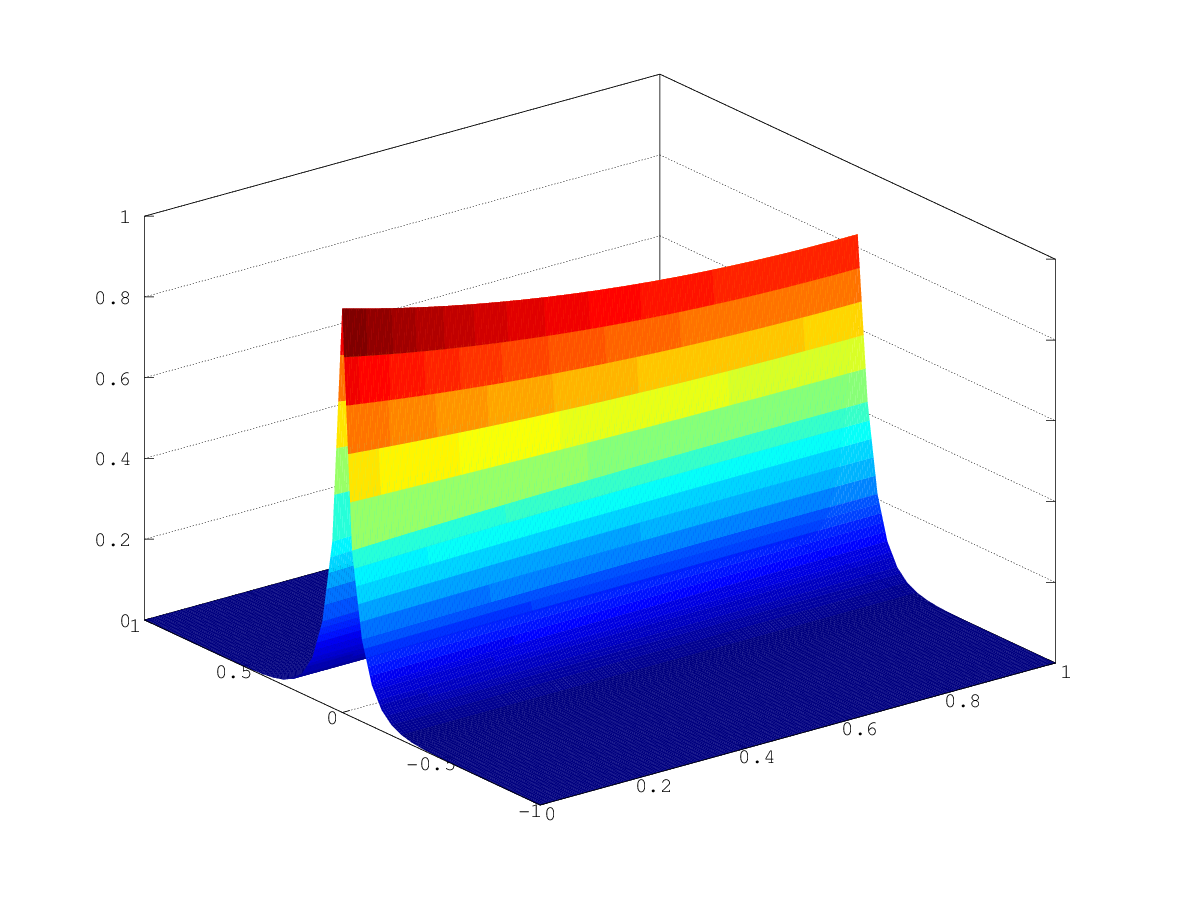}
\includegraphics[height=1.3in, trim=65 0 0 0]{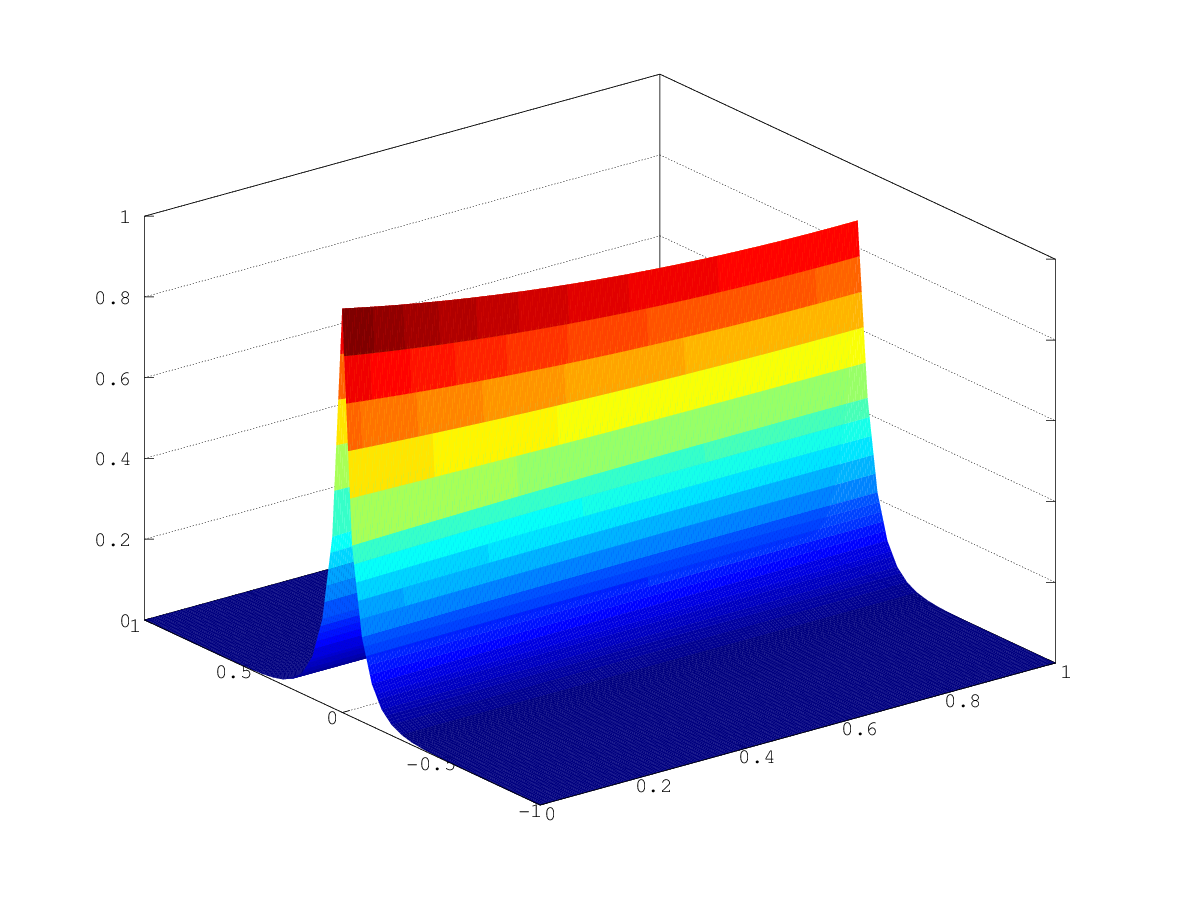}
\includegraphics[height=1.3in, trim=50 0 0 0]{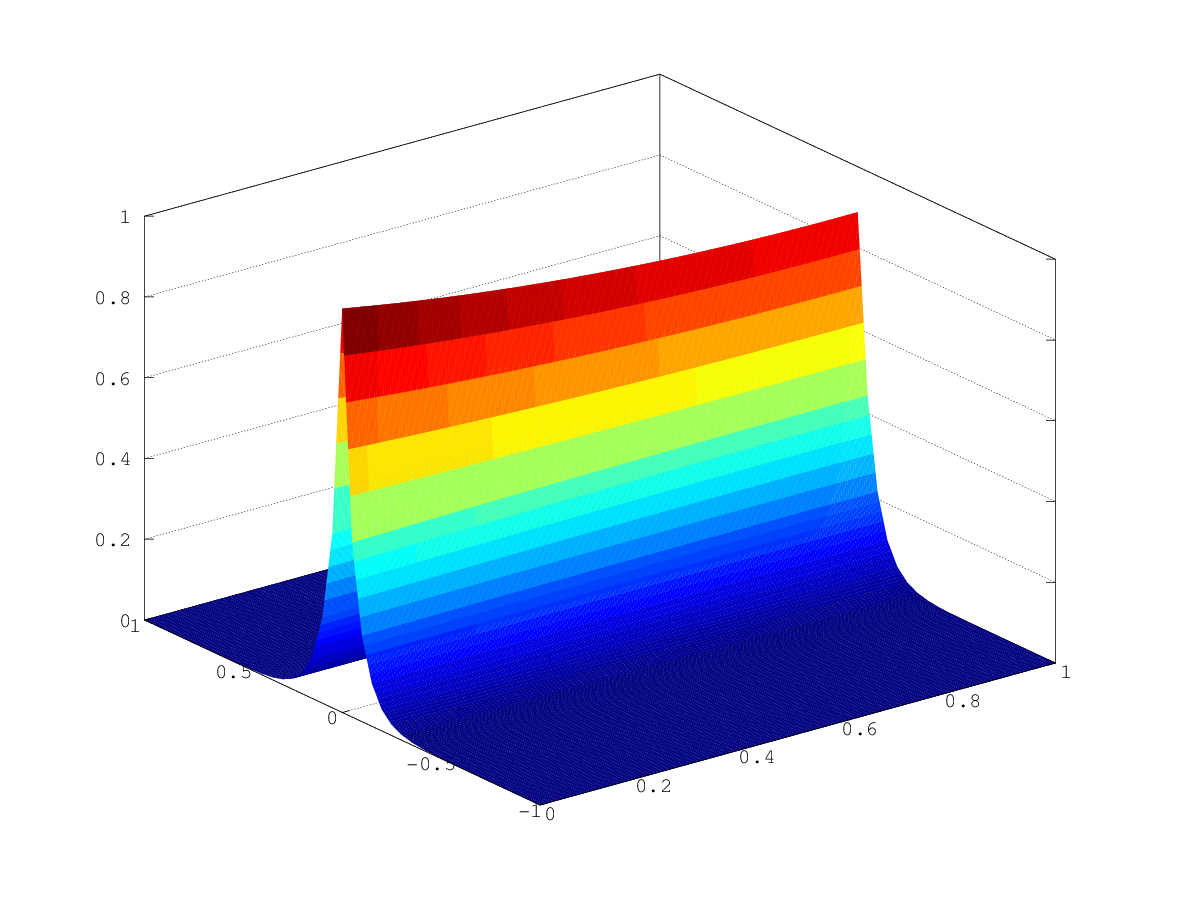}
\includegraphics[height=1.3in, trim=50 0 0 0]{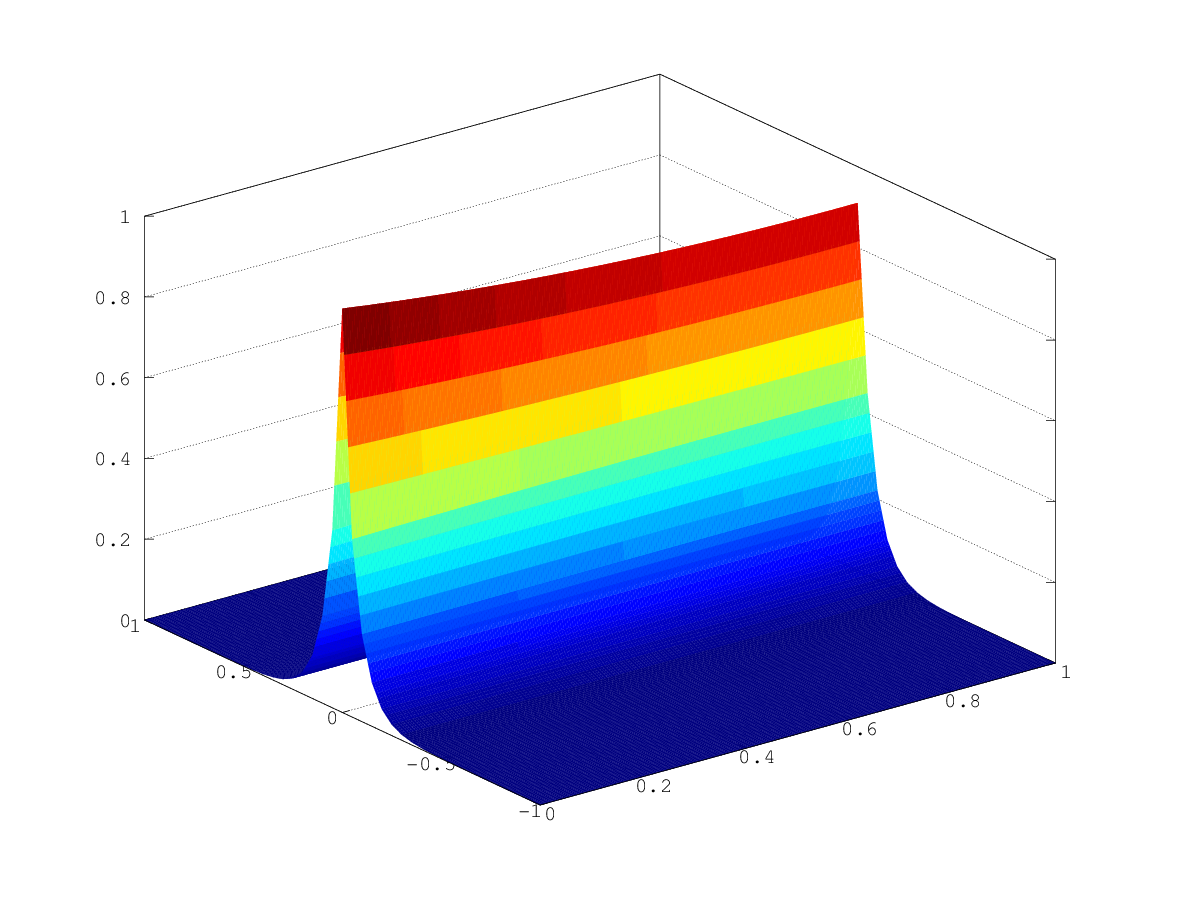}
\end{center}
\caption{Graph of $u_{1}^{h,*}$ (normalised to 1) computed for the following values of parameters: $\mathbf{b}=[100,10,10,10,10],\ \mathbf{c}=[10,10,1,10,10],\ \mathbf{p}=[100,0,100,0,0],\ d=1/10$.
First row - $h\in\{1,1/3,1/5\}$, second row - $h\in\{1/10, 1/15,1/20\}$, third row - $h\in\{1/25,1/30, 1/35\}$, fourth row - $h\in\{1/40,1/45,1/50\}$.} 
\label{H2S:wykres}
\end{figure}
\end{comment}

\section{Solvability of certain linear systems with measure valued sources}

To prove Theorem \ref{H2S:SStheo1} we will use two lemmas concerning solvability of linear elliptic boundary value problems with low regularity data.
\\

\begin{lem}\label{H2S:bvp1lem}
Assume that $0\leq a_0\in L_{\infty}(\Omega), \ 0\leq a_{11}\in L_{\infty}(\partial_1\Omega)$. Then for every $h\in(0,1], \ \lambda>0$ and $\mu_{\Omega}\in\mathcal{M}(\Omega), \ \mu_I\in\mathcal{M}(I)$ the following boundary value problem
\bs\label{H2S:bvp1}
\eq{
div(J_h(u))+(\lambda+a_0)u&=\mu_{\Omega}, && x\in \Omega\\
-J_h(u)\nu&=0, &&x\in\partial_0\Omega\\
-J_h(u)\nu+a_{11}u&=\mu_{I},&& x\in\partial_1\Omega
}
\es
has a unique $W^1_1$ solution i.e. there exists a unique $u\in W^1_1(\Omega)$ such that for every $v\in W^1_{\infty}(\Omega)$ 
\begin{align}
\int_{\Omega}[-J_{h}(u)\nabla v +(\lambda +a_0)uv]+\int_{\partial_1\Omega}a_{11}uv&=\int_{\Omega}vd\mu_{\Omega}+\int_{\partial_1\Omega}vd\mu_{I}.\label{H2S:weaklem1}
\end{align}
Moreover $u\in W^1_p(\Omega)$ for every $p<2$ and
\begin{align}\label{H2S:bvp1est}
\n{u}_{W^1_p(\Omega)}+h^{-1}\n{\partial_{x_2}u}_{L_p(\Omega)}\leq C(\n{\mu_{\Omega}}_{TV}+\n{\mu_I}_{TV}),
\end{align}
where $C$ depends only on $p,\lambda,\n{a_0}_{L_{\infty}(\Omega)},\n{a_{11}}_{L_{\infty}(\partial_1\Omega)}$.
If $\mu_{\Omega}, \mu_{I}\geq 0$ then $u\geq 0$.
\end{lem}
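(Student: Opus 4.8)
The plan is to establish existence and the key estimate \eqref{H2S:bvp1est} first, then deduce uniqueness, positivity, and the higher integrability $u\in W^1_p(\Omega)$ from it. The central difficulty is that the source data $\mu_\Omega,\mu_I$ are only measures, so the natural energy space $H^1(\Omega)=W^1_2(\Omega)$ is too strong: a measure does not in general define a bounded functional on $H^1(\Omega)$ in two dimensions (since $H^1$ does not embed in $C(\overline\Omega)$), and this is exactly why the conclusion lands in $W^1_p$ for $p<2$ only. I would therefore not attack \eqref{H2S:bvp1} directly in $W^1_1$, but instead use a duality/transposition argument built on the dual problem, which has smooth coefficients and a regular (merely bounded) right-hand side.

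First I would set up the adjoint problem. For $g\in L_\infty(\Omega)$ and $g_I\in L_\infty(\partial_1\Omega)$ consider
\bs\label{H2S:adj}
\eq{
div(J_h(v))+(\lambda+a_0)v&=g, && x\in\Omega\nonumber\\
-J_h(v)\nu&=0, && x\in\partial_0\Omega\nonumber\\
-J_h(v)\nu+a_{11}v&=g_I, && x\in\partial_1\Omega.\nonumber
}
\es
Because $a_0,a_{11}\geq0$ and $\lambda>0$, this symmetric problem is coercive on $H^1(\Omega)$ (the bilinear form is $\int_\Omega[J_h(v)\nabla\varphi+(\lambda+a_0)v\varphi]+\int_{\partial_1\Omega}a_{11}v\varphi$, which dominates $\min(1,h^{-2})\n{v}_{H^1(\Omega)}^2$ up to the $\lambda$ term), so Lax--Milgram gives a unique weak solution $v\in H^1(\Omega)$. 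The crucial quantitative input is an \emph{a priori} $W^1_{p'}$ bound on $v$ that is \textbf{uniform in $h$} and depends only on $\|g\|_\infty,\|g_I\|_\infty$ and the listed constants: precisely I want $\n{v}_{W^1_{p'}(\Omega)}\leq C(\n{g}_{L_\infty(\Omega)}+\n{g_I}_{L_\infty(\partial_1\Omega)})$ for each $p'>2$, together with control of $h^{-1}\partial_{x_2}v$. Since $p'>2$ exceeds the dimension, $W^1_{p'}(\Omega)\hookrightarrow C(\overline\Omega)$, so such a $v$ has a well-defined continuous trace and the measures $\mu_\Omega,\mu_I$ pair against it. This regularity is an elliptic estimate for the scaled operator $-\partial_{x_1}^2-h^{-2}\partial_{x_2}^2$ with Neumann-type conditions; I expect the $h$-uniformity here to be the technically delicate point, and I would obtain it either by the anisotropic rescaling $y_2=x_2/h$ that flattens $J_h$ into the ordinary Laplacian on the stretched domain (tracking how norms transform) or by testing against suitable powers of $v$ and using the boundary-trace and Gagliardo--Nirenberg inequalities, being careful that the trace constant on $\partial_1\Omega$ does not blow up as $h\to0$.

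With this dual estimate in hand, I would define the solution by transposition. The map
\[
(g,g_I)\;\longmapsto\;\int_\Omega v\,d\mu_\Omega+\int_{\partial_1\Omega}v\,d\mu_I
\]
is, by the uniform bound and the embedding into $C(\overline\Omega)$, a bounded linear functional on $L_{p}(\Omega)\times L_{p}(\partial_1\Omega)$ (the predual pairing being with $L_{p'}$), of norm at most $C(\n{\mu_\Omega}_{TV}+\n{\mu_I}_{TV})$. By the Riesz representation it is represented by a unique $u\in L_{p'}'=L_p$ pair; unwinding the definition of the transposed problem shows $u$ satisfies the weak formulation \eqref{H2S:weaklem1} for all smooth $v$, hence for all $v\in W^1_\infty(\Omega)$ by density, and the representation norm gives exactly \eqref{H2S:bvp1est} once I upgrade $u$ from $L_p$ to $W^1_p$. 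That upgrade, and the gradient bound including the $h^{-1}\partial_{x_2}u$ term, follow by feeding the already-established weak identity back into an elliptic regularity estimate dual to the one used for $v$: the $W^1_p$ norm of $u$ is controlled by testing \eqref{H2S:weaklem1} against the solution of an auxiliary dual problem with $L_{p'}$ data, which is admissible precisely because $p'>2$.

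Finally, uniqueness and positivity. Uniqueness follows because any two $W^1_1$ solutions have difference $w$ satisfying the homogeneous weak identity against every $v\in W^1_\infty(\Omega)$; choosing $v$ to be the (smooth, by the dual regularity above) solution of the adjoint problem with an arbitrary $L_\infty$ datum forces $\int_\Omega w\,g+\int_{\partial_1\Omega}w\,g_I=0$ for all such data, whence $w=0$. For positivity, given $\mu_\Omega,\mu_I\geq0$, I would test the weak formulation with $v$ equal to the adjoint solution generated by nonnegative data $g,g_I\geq0$; the maximum principle for the coercive adjoint problem (again using $a_0,a_{11}\geq0,\ \lambda>0$) yields $v\geq0$, and then $\int_\Omega u g+\int_{\partial_1\Omega}u g_I=\int v\,d\mu_\Omega+\int v\,d\mu_I\geq0$ for all nonnegative $g,g_I$, which gives $u\geq0$. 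The only genuine obstacle throughout is securing the $h$-uniform elliptic regularity for the anisotropic operator $J_h$; everything else is a standard Lax--Milgram plus transposition scheme.
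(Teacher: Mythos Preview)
Your transposition/Stampacchia-duality strategy is a legitimate alternative to what the paper does, and for uniqueness the two even coincide in spirit. For existence and the estimate, however, the paper takes a different path: it follows Boccardo--Gallouet directly. For bounded data $\mu_\Omega,\mu_I\in L_\infty$ it obtains a $W^1_2$ solution by Lax--Milgram, then tests the equation against truncations $\varphi_n(u)$ and $\psi_n(u)$ of the solution itself to control $\|u\|_{L_1}$ and $\int_{B_n} m_h(u)^2$ on the level sets $B_n=\{n\le|u|<n+1\}$, where $m_h(u)^2=|\partial_{x_1}u|^2+h^{-2}|\partial_{x_2}u|^2$. A H\"older/Sobolev summation over $n$ then gives $\|m_h(u)\|_{L_p}\le C$ for $p<2$, which is precisely \eqref{H2S:bvp1est}; the general measure case follows by approximation. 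The $h$-uniformity is free in this argument because the anisotropic quantity $m_h$ dominates the isotropic one ($m_h\ge m_1$ for $h\le1$), so the standard Sobolev inequality can be used without modification and the anisotropy only helps.

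By contrast, your scheme relocates rather than resolves the $h$-uniformity issue. You need $\|v\|_{W^1_{p'}(\Omega)}\le C$ for the dual solution with $L_\infty$ data, uniformly as $h\to0$, and you correctly identify this as the delicate step but do not carry it out. The difficulty is real: De Giorgi--Nash--Moser and Meyers constants depend on the ellipticity ratio, here $h^{-2}$; your rescaling $y_2=x_2/h$ converts the operator to the ordinary Laplacian but on the rectangle $(-1,1)\times(0,1/h)$ of unbounded aspect ratio, so the domain-dependence of the regularity constants must still be tracked (the Robin condition on $\partial_1\Omega$ also obstructs a clean reflection argument). Your ``upgrade from $L_p$ to $W^1_p$'' is likewise underspecified: duality against $L_{p'}$ right-hand sides yields $L_p$ bounds on $u$, not gradient bounds; for the latter you would need a dual estimate for divergence-form data, i.e.\ a Meyers-type bound, with the same $h$-uniformity problem. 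None of this is fatal, but as written the proposal defers exactly the work that the truncation argument in the paper dispatches in a few lines.
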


\
\\

\begin{figure}[H]


\begin{center}
\includegraphics[height=1.5in, trim=65 0 0 50]{00001.png}
\includegraphics[height=1.5in, trim=50 0 0 50]{00003.png}
\includegraphics[height=1.5in, trim=50 0 0 50]{00005.png}
\end{center}
\begin{center}
\includegraphics[height=1.5in, trim=65 0 0 50]{00010.png}
\includegraphics[height=1.5in, trim=50 0 0 50]{00015.png}
\includegraphics[height=1.5in, trim=50 0 0 50]{00020.png}
\end{center}
\begin{center}
\includegraphics[height=1.5in, trim=65 0 0 50]{00025.png}
\includegraphics[height=1.5in, trim=50 0 0 50]{00030.png}
\includegraphics[height=1.5in, trim=50 0 0 50]{00035.png}
\end{center}
\begin{center}
\includegraphics[height=1.5in, trim=65 0 0 50]{00040.png}
\includegraphics[height=1.5in, trim=50 0 0 50]{00044.png}
\includegraphics[height=1.5in, trim=50 0 0 50]{00050.png}
\end{center}
\caption{Graph of $u_{1}^{h,}$ (normalised to 1) computed for the following values of parameters: $\mathbf{b}=[100,10,10,10,10],\ \mathbf{c}=[10,10,1,10,10],\ \mathbf{p}=[100,0,100,0,0],\ d=1/10$.
First row - $h\in\{1,1/3,1/5\}$, second row - $h\in\{1/10, 1/15,1/20\}$, third row - $h\in\{1/25,1/30, 1/35\}$, fourth row - $h\in\{1/40,1/45,1/50\}$. A numerical scheme based on the finite difference method was implemented using the software Octave.} 
\label{H2S:wykres}
\end{figure}

\begin{proof}
We divide the proof into two parts. In the first part we employ the technique from \cite{BocGau} to prove existence of the solution which additionally satisfies \eqref{H2S:bvp1est}. Notice that one has to use a slight modification due to the Robin boundary condition instead of the Dirichlet condition which is treated in \cite{BocGau}. In the second part of the proof, using duality technique from \cite{BreStr}, we show that the solution is unique in the $W^1_1$ class. %Finally in the third part we prove that if $\mu_{\Omega}, \mu_I$ are nonnegative then so is $u$.
\\
\newline
\textbf{Existence}\\
Observe that due to linearity of the problem \eqref{H2S:bvp1} one can assume without loss of generality that
\begin{align*}
\n{\mu_{\Omega}}_{TV}+\n{\mu_I}_{TV}\leq 1.
\end{align*}
First let us consider $\mu_{\Omega}\in L_{\infty}(\Omega), \mu_I\in L_{\infty}(\partial_1\Omega)$.
Using the Lax-Milgram lemma we obtain that the problem \eqref{H2S:bvp1} has a unique solution $u\in W^1_2(\Omega)$. We will now prove that this solution satisfies \eqref{H2S:bvp1est}. Observe that if $\phi\in W^1_{\infty}(\mathbb{R})$ is such that 
\begin{align}
\n{\phi}_{L_\infty(\mathbb{R})}\leq 1, \ y\phi(y)\geq 0, \ \phi'(y)\geq 0,
\end{align}
then testing \eqref{weaklem1} by $v=\phi(u)\in W^1_2(\Omega)$ we obtain
\bs
\eq{
\lambda\int_{\Omega}u\phi(u)&\leq 1, \label{H2S:lem1proofest1}\\
0\leq\int_{\Omega}-\phi'(u)J_{h}(u)\nabla u&\leq 1.\label{H2S:lem1proofest2}
}
\es

For $n\geq 1$ define
\begin{align}\label{H2S:varphi}
\varphi_n(y)=\begin{cases}
ny & \mbox{if } |y|<1/n\\
sgn(y) & \mbox{if }  |y|\geq 1/n	
\end{cases}.
\end{align}

Choosing in \eqref{H2S:lem1proofest1} $\phi=\varphi_n$ and taking $n \to\infty$ we obtain that
\begin{align}
\n{u}_{L_1(\Omega)}\leq 1/\lambda\leq C\label{H2S:L1est}.
\end{align} 

For $n\geq0$ define $B_n=\{x: n\leq|u(x)|\leq n+1\}$ and
\begin{align*}
\psi_n(y)=\begin{cases}
0 & \mbox{if } |y|<n\\
y-sgn(y)\cdot n & \mbox{if }  n\leq|y|\leq n+1\\
sgn(y) & \mbox{if } |y|>n+1	
\end{cases}.
\end{align*}
Choosing in \eqref{H2S:lem1proofest2} $\phi=\psi_n$ we obtain that
\begin{align}
\n{m_h(u)\mathbf{1}_{B_n}}_{L_2(\Omega)}^2=\int_{B_n}-J_h(u)\nabla u\leq1,
\end{align}
where $m_h(u)=\sqrt{|\partial_{x_1}u|^2+h^{-2}|\partial_{x_2}u|^2}$. Using H\"older's inequality with $1=p/2+p/p^*$ we have
\begin{align}
\n{m_h(u)\mathbf{1}_{B_n}}^p_{L_p(\Omega)}\leq\n{m_h(u)\mathbf{1}_{B_n}}^p_{L_2(\Omega)}|B_n|^{p/p^*}\leq |B_n|^{p/p^*}\leq C\label{H2S:Hol}.
\end{align}
Using Sobolev's inequality and \eqref{H2S:L1est} we have
\begin{align}
\n{u}_{L_{p^*}(\Omega)}\leq C(\n{m_1(u)}_{L_p(\Omega)}+\n{u}_{L_1(\Omega)})\leq C(\n{m_h(u)}_{L_p(\Omega)}+1)\label{H2S:sob}.
\end{align}
From \eqref{H2S:Hol}, H\"older inequality (for series) and \eqref{H2S:sob} we have
\begin{align*}
\n{m_h(u)}_{L_p(\Omega)}^p&=\sum_{n=0}^{N}\n{m_h(u)\mathbf{1}_{B_n}}_{L_p(\Omega)}^p+\sum_{n=N+1}^{\infty}\n{m_h(u)\mathbf{1}_{B_n}}_{L_p(\Omega)}^p\leq C(N+1)+\sum_{n=N+1}^{\infty}|B_n|^{p/p^*}\\
&\leq C(N+1)+\sum_{n=N+1}^{\infty}n^{-p}\n{u\mathbf{1}_{B_n}}_{L_{p^*}(\Omega)}^p\leq C(N+1)+\Big(\sum_{n=N+1}^{\infty}n^{-2}\Big)^{p/2}\n{u}_{L_{p^*}(\Omega)}^p\\
&=C(N+1)+A(N)^p\n{u}_{L_{p^*}(\Omega)}^p\leq C(N+1)+CA(N)^p(\n{m_h(u)}_{L_p(\Omega)}^p+1).
\end{align*}

Taking $N$ sufficiently large we obtain
\begin{align*}
\n{m_h(u)}_{L_p(\Omega)}\leq C.
\end{align*}
Finally from \eqref{H2S:sob} it follows that $\n{u}_{L_p(\Omega)}\leq C\n{u}_{L_{p^*}(\Omega)}\leq C(\n{m_h(u)}_{L_p(\Omega)}+1)\leq C$ which completes the proof of \eqref{H2S:bvp1est}.\\

The case of arbitrary Radon measures $\mu_{\Omega}, \mu_{I}$ follows by standard approximation, see \cite{BocGau} for instance.

\textbf{Uniqueness}\\
We shall use duality technique. Let $u$ be a $W^1_1$ solution to  
\bs\label{H2S:bvp1zero}
\eq{
div(J_h(u))+(\lambda+a_0)u&=0,&&x\in\Omega\\
-J_h(u)\nu&=0,&&x\in\partial_0\Omega\\
-J_h(u)\nu+a_{11}u&=0,&&x\in\partial_1\Omega.
}
\es
We intend to prove that $u\equiv0$.
First we assume additionally that $a_{11}\equiv 0$. Using \cite{Fai} we get that for every $f\in L_q(\Omega), q>2$, problem
\bs\label{H2S:bvp1dual}
\eq{
div(J_h(v))+(\lambda+a_0)v&=f, && x\in \Omega\\
-J_h(v)\nu&=0, &&x\in\partial\Omega
}
\es
has a unique solution $v\in W^2_{q}(\Omega)$. Since $q>2$ we have $W^2_q(\Omega)\subset W^1_{\infty}(\Omega)$, so that for every $w\in W^1_1(\Omega)$ we have
\begin{align*}
\int_{\Omega}[-J_h(v)\nabla w+(\lambda+a_0)vw]=\int_{\Omega}fw.
\end{align*}
Taking $w=u$ we thus get $\int_{\Omega}fu=0$ and since $f$ was arbitrary - $u\equiv0$ follows.\\
Now let us take $0\leq a_{11}\in L_{\infty}(\partial_1\Omega)$. Denote $g=-a_{11}u$. Observe that $u$ is a $W^1_1$ solution of 
\bs\label{H2S:bvp1boost}
\eq{
div(J_h(u))+(\lambda+a_0)u&=0, && x\in \Omega\\
-J_h(u)\nu&=0, &&x\in\partial_0\Omega\\
-J_h(u)\nu&=g, &&x\in \partial_1\Omega.
}
\es
As we already showed \eqref{H2S:bvp1boost} has a unique $W^1_1$ solution and, thus $u\in W^1_p(\Omega)$ for every $p<2$. In particular $g\in L_q(\partial_1\Omega)$ for every $q<\infty$. We can now use Lax-Milgram theorem to prove that \eqref{H2S:bvp1boost} has a unique $W^1_2$ solution and thus conclude that $u\in W^1_2(\Omega)$. It follows that, $u$ is also a $W^1_2$ solution of \eqref{H2S:bvp1zero}, whence $u\equiv0$. 

\end{proof}

\begin{lem}\label{H2S:bvp2lem}
Assume that $d>0, \ 0\leq a_0\in L_{\infty}(\Omega)$ and
\begin{align}\label{H2S:dominant}
a_{ij}\in L_{\infty}(\partial_1\Omega), \ a_{11}\geq|a_{21}|, a_{22}\geq|a_{12}|.
\end{align}
Then for every $h\in(0,1], \lambda>0, \mu_{\Omega}\in\mathcal{M}(\Omega), \mu_I\in\mathcal{M}(I)$ the following system
\bs\label{H2S:bvp2}
\eq{
div(J_h(u_1))+(\lambda+a_0)u_1&=\mu_{\Omega},&& x\in\Omega\label{H2S:bvp2A}\\
-d\partial^2_{x_1}u_2-a_{21}u_1+(\lambda+a_{22}) u_2&=0,&& x\in\partial_1\Omega\label{H2S:bvp2B}
}
\es
with boundary conditions
\bs\label{H2S:bvp2bcd}
\eq{
-J_h(u_1)\nu&=0,&&x\in\partial_0\Omega\\
-J_h(u_1)\nu+a_{11}u_1-a_{12}u_2&=\mu_I,&&x\in\partial_1\Omega\\
\partial_{x_1}u_2&=0,&&x\in\partial\partial_1\Omega,
}
\es
has a unique $W_1^1$ solution i.e. there exists a unique $(u_1,u_2)\in W^1_1(\Omega)\times W^1_1(\partial_1\Omega)$ such that for every $(v_1,v_2)\in W^1_{\infty}(\Omega)\times W^1_{\infty}(\partial_1\Omega)$:
\begin{align*}
\int_{\Omega}[-J_h(u_1)\nabla v_1+(\lambda+a_0)u_1v_1]+\int_{\partial_1\Omega}\Big[d\partial_{x_1}u_2\partial_{x_1}v_2+\lambda u_2v_2-\Big(M(u_1,u_2)\Big|(v_1,v_2)\Big)_{\mathbb{R}^2}\Big]\\
=\int_{\Omega}v_1d\mu_{\Omega}+\int_{\partial_1\Omega}v_1d\mu_I,
\end{align*}
where $M(u_1,u_2)=(-a_{11}u_1+a_{12}u_2,a_{21}u_1-a_{22}u_2)$.

Moreover $(u_1,u_2)\in W^1_p(\Omega)\times W^2_q(\partial_1\Omega)$ for every $1\leq p<2, 1\leq q<\infty$ and
\begin{align}\label{H2S:bvp2est}
\n{u_1}_{W^1_p(\Omega)}+h^{-1}\n{\partial_{x_2}u_1}_{L_p(\Omega)}+\n{u_2}_{W^2_q(\partial_1\Omega)}\leq C(\n{\mu_{\Omega}}_{TV}+\n{\mu_I}_{TV},
\end{align}
where $C$ depends only on $p,\lambda,d,\n{a_0}_{L_\infty(\Omega)},\n{a_{ij}}_{L_\infty(\partial_1\Omega)}$. If $\mu_{\Omega}, \mu_I ,a_{12},a_{21}\geq0$ then $u_1,u_2\geq 0$.
\end{lem}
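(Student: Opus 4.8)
The plan is to reduce the coupled system to the scalar problem already solved in Lemma \ref{H2S:bvp1lem} together with the elementary one dimensional problem for $u_2$, and to extract all quantitative information from a single test-function computation that exploits the sign condition \eqref{H2S:dominant}. As in Lemma \ref{H2S:bvp1lem}, by linearity and a final approximation argument it suffices to treat bounded data $\mu_\Omega\in L_\infty(\Omega)$, $\mu_I\in L_\infty(\partial_1\Omega)$ with $\n{\mu_\Omega}_{TV}+\n{\mu_I}_{TV}\leq1$, and then pass to general measures using the $h$-uniform bound \eqref{H2S:bvp2est} and weak compactness in $W^1_p(\Omega)\times W^2_q(\partial_1\Omega)$.

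\textbf{Existence for bounded data.} The full bilinear form is \emph{not} coercive on $W^1_2(\Omega)\times W^1_2(\partial_1\Omega)$: the symmetric part of the coupling, $a_{11}u_1^2-(a_{12}+a_{21})u_1u_2+a_{22}u_2^2$, need not be pointwise nonnegative under \eqref{H2S:dominant}, so I would not invoke Lax--Milgram on the system directly. Instead I would decouple it: given $w\in L_2(\partial_1\Omega)$, solve the scalar Robin problem of Lemma \ref{H2S:bvp1lem} for $u_1$ with boundary datum $\mu_I+a_{12}w$ and interior datum $\mu_\Omega$ (legitimate since $a_{11}\geq0$), and then solve the coercive one dimensional Neumann problem $-d\partial^2_{x_1}u_2+(\lambda+a_{22})u_2=a_{21}u_1$ for $u_2$. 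The resulting map $T:w\mapsto u_2$ is compact on $L_2(\partial_1\Omega)$ (it gains regularity, landing in $W^2_q\hookrightarrow\hookrightarrow L_2$) and continuous, so a fixed point --- hence a solution --- follows from Schaefer's theorem once all solutions of $w=\sigma Tw$, $\sigma\in[0,1]$, are bounded a priori; this uniform bound is furnished by the estimate below, noting that \eqref{H2S:dominant} is preserved when the coupling is scaled by $\sigma\leq1$. (Equivalently one may split off the coercive part $\int_\Omega m_h(u_1)^2+\lambda\int_\Omega u_1^2+\int_{\partial_1\Omega}(d|\partial_{x_1}u_2|^2+\lambda u_2^2)$ and treat the remaining boundary and lower order terms as a compact perturbation, concluding by the Fredholm alternative.) For bounded data the first component is in fact in $W^1_2(\Omega)$, since the scalar $u_1$-problem with $L_q$ boundary datum is solvable by Lax--Milgram and its $W^1_2$ solution coincides, by the uniqueness part of Lemma \ref{H2S:bvp1lem}, with the constructed one.

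\textbf{The a priori estimate.} This is the heart of the matter and the only place where \eqref{H2S:dominant} is used. Working with the $W^1_2$ solution, I test the $u_1$-equation with $\varphi_n(u_1)$ and the $u_2$-equation with $\varphi_n(u_2)$, where $\varphi_n$ is the truncation \eqref{H2S:varphi}, add the two identities, and let $n\to\infty$. The gradient contributions $\int\varphi_n'(u_i)m_h(u_i)^2\geq0$ drop, and the boundary coupling produces the integrand $a_{11}|u_1|+a_{22}|u_2|-a_{12}u_2\,\mathrm{sgn}(u_1)-a_{21}u_1\,\mathrm{sgn}(u_2)\geq(a_{11}-|a_{21}|)|u_1|+(a_{22}-|a_{12}|)|u_2|\geq0$ precisely by \eqref{H2S:dominant}. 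Since $a_0\geq0$ this yields the $h$-independent bound $\lambda(\n{u_1}_{L_1(\Omega)}+\n{u_2}_{L_1(\partial_1\Omega)})\leq\n{\mu_\Omega}_{TV}+\n{\mu_I}_{TV}$. From here the gradient estimate for $u_1$ follows by the Boccardo--Gallou\"et truncation argument of Lemma \ref{H2S:bvp1lem} applied to the $u_1$-equation with effective boundary source $\mu_I+a_{12}u_2$, whose mass is already controlled; this gives $\n{u_1}_{W^1_p(\Omega)}+h^{-1}\n{\partial_{x_2}u_1}_{L_p(\Omega)}\leq C$ for every $p<2$. Consequently the trace of $a_{21}u_1$ lies in $L_q(\partial_1\Omega)$ for every $q<\infty$ (trace of $u_1$ together with one dimensional Sobolev embedding), and classical regularity for the Neumann problem \eqref{H2S:bvp2B} upgrades $u_2$ to $W^2_q(\partial_1\Omega)$ with the stated bound \eqref{H2S:bvp2est}.

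\textbf{Uniqueness and sign.} Because the system is not coercive, I cannot close uniqueness by an energy identity as for a single equation. Instead, given a homogeneous $W^1_1$ solution $(u_1,u_2)$, I would bootstrap its regularity: the trace of $u_1$ lies in $L_1(\partial_1\Omega)$, so \eqref{H2S:bvp2B} forces $u_2\in W^2_1\subset L_\infty$; then $a_{12}u_2\in L_\infty$ and Lemma \ref{H2S:bvp1lem} (plus the Lax--Milgram/uniqueness step above) places $u_1\in W^1_2(\Omega)$ and $u_2\in W^2_q(\partial_1\Omega)$. On this regularity level the test-function computation of the a priori estimate is fully justified and, with zero data, gives $\lambda(\n{u_1}_{L_1(\Omega)}+\n{u_2}_{L_1(\partial_1\Omega)})\leq0$, hence $(u_1,u_2)\equiv0$. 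Finally, nonnegativity when $\mu_\Omega,\mu_I,a_{12},a_{21}\geq0$ follows because each step of the decoupling preserves the positive cone --- Lemma \ref{H2S:bvp1lem} already yields $u_1\geq0$ from nonnegative data, and the Neumann problem with nonnegative source and nonnegative zeroth order coefficient yields $u_2\geq0$ by the maximum principle --- so the fixed point is nonnegative. I expect the main obstacle to be the interplay in the existence step between the absence of coercivity, which rules out a one-shot Lax--Milgram, and the need to already dispose of a sufficiently regular ($W^1_2$) solution before the sign test that produces \eqref{H2S:bvp2est} can be run; organising the decoupling (or Fredholm) step so that it delivers such a solution, on which \eqref{H2S:dominant} can be activated, is the delicate point.
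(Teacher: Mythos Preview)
Your proposal is correct, and the a~priori estimate, the bootstrap for uniqueness, and the passage from bounded data to measures are essentially identical to the paper's argument. The difference lies in the existence step.

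You assert that the coupled bilinear form is not coercive and therefore decline to apply Lax--Milgram to the system. The paper takes the opposite view: it \emph{does} apply Lax--Milgram to the full system on $X_{1/2}=W^1_2(\Omega)\times W^1_2(\partial_1\Omega)$, but only for $\lambda$ large. The point you may have overlooked is that the boundary cross terms $\int_{\partial_1\Omega}a_{ij}u_iu_j$ involve the trace of $u_1$, which by the trace/interpolation inequality can be absorbed into $\epsilon\|\nabla u_1\|_{L_2(\Omega)}^2+C_\epsilon\|u_1\|_{L_2(\Omega)}^2$ and hence dominated by a sufficiently large $\lambda$. Having $[\lambda_0,\infty)\subset\rho(A)$, the paper then observes that the resolvent is compact (since $X_{1/2}\hookrightarrow\hookrightarrow X_{-1/2}$), so the spectrum consists of eigenvalues, and the very same $\varphi_n$ computation you use for the $L_1$ bound shows that $\ker(\lambda-A)=\{0\}$ for every $\lambda>0$; hence $(0,\infty)\subset\rho(A)$ and existence follows for the given $\lambda$. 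This is precisely the Fredholm alternative you mention parenthetically as an equivalent route.

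Your decoupling plus Schaefer argument also works, but it is a slightly roundabout way to solve a linear problem: you introduce a nonlinear fixed-point scheme whose a~priori bound is again the $\varphi_n$ estimate (with $a_{21}$ replaced by $\sigma a_{21}$, which indeed preserves \eqref{H2S:dominant}). The advantage of your route is modularity---everything is reduced to the scalar Lemma~\ref{H2S:bvp1lem} and a one-dimensional Neumann problem---whereas the paper's spectral argument is shorter and stays within linear theory. For the nonnegativity claim your decoupling pays off: restricting the fixed-point map to the positive cone and invoking uniqueness is cleaner than extracting a sign from the operator-theoretic construction.
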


\begin{proof}

\textbf{Existence}\\
Let us define the Hilbert spaces $X_{1/2}=W^1_2(\Omega)\times W^1_2(\partial_1\Omega), \ X_{-1/2}=X_{1/2}^*$ and an unbounded operator \\$A: X_{-1/2}\supset X_{1/2}\to X_{-1/2}$ by
\begin{align*}
&\Big<A(u_1,u_2),(v_1,v_2)\Big>_{(X_{-1/2},X_{1/2})}=\int_{\Omega}[J_h(u_1)\nabla v_1-a_0u_1v_1]
+\int_{\partial_1\Omega}\Big[-d\partial_{x_1}u_2\partial_{x_1}v_2+\Big(M(u_1,u_2)\Big|(v_1,v_2)\Big)_{\mathbb{R}^2}\Big].
\end{align*}
Due to boundedness of $a_0$ and $a_{ij}$  operator $\lambda-A$ is coercive for $\lambda$ large enough and the Lax-Milgram lemma guarantees that there is $\lambda_0>0$ such that $[\lambda_0,\infty)\subset\rho(A)$  ($\rho(A)$ denotes the resolvent set of $A$) . Because $X_{1/2}$ is compactly embedded into $X_{-1/2}$ we get that for $\lambda\in\rho(A)$ the resolvent operator $(\lambda-A)^{-1}$ is compact and thus the spectrum $\sigma(A)$ consists entirely of eigenvalues. Choose any $\lambda\in\mathbb{R}, \ \theta\in X_{-1/2}$ and $u=(u_1,u_2)\in X_{1/2}$ such that $(\lambda-A)u=\theta$. Let $\varphi_n$ be the function defined in \eqref{H2S:varphi}. Then
\begin{align*}
&\Big<\theta,(\varphi_n(u_1),\varphi_n(u_2))\Big>_{(X_{-1/2},X_{1/2})}=\Big<(\lambda-A)(u_1,u_2),(\varphi_n(u_1),\varphi_n(u_2))\Big>_{(X_{-1/2},X_{1/2})}\\
&=\int_{\Omega}[-\varphi'_n(u_1)J_h(u_1)\nabla u_1+(\lambda+a_0)u_1\varphi_n(u_1)]\\
&+\int_{\partial_1\Omega}\Big[d\varphi'_n(u_2)|\partial_{x_1}u_2|^2-\Big(M(u_1,u_2)\Big| (\varphi_n(u_1),\varphi_n(u_2))\Big)_{\mathbb{R}^2}+\lambda u_2\varphi_n(u_2)\Big]\\
&\geq\lambda\Big(\int_{\Omega}u_1\varphi_n(u_1)+\int_{\partial_1\Omega}u_2\varphi_n(u_2)\Big)-\int_{\Omega}\Big( M(u_1,u_2)\Big| (\varphi_n(u_1),\varphi_n(u_2))\Big)_{\mathbb{R}^2}.
\end{align*}
Thus taking $n\to \infty$ and using \eqref{H2S:dominant} we get
\begin{align}
\liminf_{n\to\infty}\Big<\theta,(\varphi_n(u_1),\varphi_n(u_2))\Big>_{(X_{-1/2},X_{1/2})}\geq\lambda(\n{u_1}_{L_1(\Omega)}+\n{u_2}_{L_1(\partial_1\Omega)}).\label{H2S:blah}
\end{align}
In particular it follows from \eqref{H2S:blah} that for $\lambda>0$ equation
$(\lambda-A)u=0$
does not have nontrivial solutions, whence $(0,\infty)\subset\rho(A)$.\\ Observe that when $\mu_{\Omega}, \mu_I$ are bounded functions then the distribution $\theta$ defined by
\begin{align}
\Big<\theta,(v_1,v_2)\Big>=\int_{\Omega}v_1d\mu_{\Omega}+\int_{I}v_1d\mu_{I}=\int_{\Omega}v_1\mu_{\Omega}dx+\int_Iv_1(\cdot,0)\mu_Idx_1\label{H2S:theta}
\end{align} 
belongs to $X_{1/2}^*$ thus equation $(\lambda-A)u=\theta$ has a unique solution $u=(u_1,u_2)\in X_{1/2}$ which is a solution to problem \eqref{H2S:bvp2}-\eqref{H2S:bvp2bcd}. We will now prove that $u$ satisfies \eqref{H2S:bvp2est}.  Due to linearity of \eqref{H2S:bvp2}, \eqref{H2S:bvp2bcd} we can assume, without loss of generality, that
\begin{align*}
\n{\mu_{\Omega}}_{TV}+\n{\mu_I}_{TV}\leq1.
\end{align*}
Next we prove respectively that
\begin{align}
\lambda(\n{u_1}_{L_1(\Omega)}+\n{u_2}_{L_1(\partial_1\Omega)})&\leq C,\label{H2S:uno}\\
\n{u_1}_{W^1_p(\Omega)}+h^{-1}\n{\partial_{x_2}u_1}_{L_p(\partial_1\Omega)}&\leq C,\label{H2S:dos}\\
\n{u_2}_{W^2_q(\partial_1\Omega)}&\leq C\label{H2S:tres}.
\end{align}
To get \eqref{H2S:uno} observe that from \eqref{H2S:blah} with $\theta$ given by \eqref{H2S:theta} one has
\begin{align*}
\lambda(\n{u_1}_{L_1(\Omega)}+\n{u_2}_{L_1(\partial_1\Omega)})\leq\liminf_{n\to\infty}\Big<\theta,(\varphi_n(u_1),\varphi_n(u_2))\Big>_{(X_{-1/2},X_{1/2})}\leq\n{\mu_{\Omega}}_{L_1(\Omega)}+\n{\mu_I}_{L_1(I)}\leq 1,
\end{align*}
since $|\varphi_n(y)|\leq 1$ for $y\in\mathbb{R}$. Then \eqref{H2S:dos} follows from \eqref{H2S:uno} and Lemma \ref{H2S:bvp1lem}, while \eqref{H2S:tres} follows from \eqref{H2S:bvp2B}, \eqref{H2S:dos} and the fact that for every $1\leq q <\infty$ there exists $1\leq p<2$ such that the trace operator maps $W^1_p(\Omega)$ into $L_q(\partial_1\Omega)$. To prove existence of solutions to \eqref{H2S:bvp2}, \eqref{H2S:bvp2bcd} for the case when $\mu_{\Omega}$ and $\mu_I$ are finite Radon measures one proceeds by the standard approximation technique with the use of \eqref{H2S:bvp2est}.
\\
\newline
\textbf{Uniqueness}\\
Let $(u_1,u_2)$ be a $W^1_1$ solution of problem \eqref{H2S:bvp2}, \eqref{H2S:bvp2bcd}
with $\lambda>0, \mu_{\Omega}=0 , \mu_I= 0$.

%\bs\label{bvp2zero}
%\eq{
%div(J_k(u_1))+(\lambda+a_0)u_1&=0,&& x\in\Omega\label{SS1A}\\
%-d\partial^2_{x_1}u_2-a_{21}u_1+(\lambda+a_{22}) u_2&=0,&& x\in\partial_1\Omega\label{SS1B}
%}
%\es
%with boundary conditions
%\bs
%\eq{
%-J_k(u_1)\nu&=0,&&x\in\partial_0\Omega\\
%-J_k(u_1)\nu+a_{11}u_1-a_{12}u_2&=0,&&x\in\partial_1\Omega\\
%\partial_{x_1}u_2&=0,&&x\in\partial\partial_1\Omega.
%}
%\es

Denoting $g_1=a_{12}u_2\in L_{\infty}(I), g_2=a_{21}u_1\in L_1(I)$ we see that $u_1$ is a $W^1_1$ solution of
\bs\label{H2S:bvp2zerou1}
\eq{
div(J_h(u))+(\lambda+a_0)u&=0,&& x\in\Omega\\
-J_h(u)\nu&=0,&&x\in\partial_0\Omega\\
-J_h(u)\nu+a_{11}u&=g_1,&&x\in\partial_1\Omega
}
\es
and $u_2$ is a $W^1_1$ solution of
\bs\label{H2S:bvp2zerou2}
\eq{
-d\partial^2_{x_1}u+(\lambda+a_{22})u&=g_2,&& x\in I\\
\partial_{x_1}u&=0,&&x\in\partial I.
}
\es
Since $g_1\in  L_{\infty}(\partial_1\Omega)$ then by Lax-Milgram lemma problem \eqref{H2S:bvp2zerou1} has a $W^1_2$ solution which by Lemma \ref{H2S:bvp1lem} is unique in $W^1_1$ class. Thus $u_1$ is a $W^1_2$ solution of \eqref{H2S:bvp2zerou1} and $g_2\in L_2(I)$. From Lax-Milgram lemma we obtain that \eqref{H2S:bvp2zerou2} has a $W^1_2$ solution which due to duality technique is unique in $W_1^1$ class. Thus $u_2\in W^1_2$. Finally we observe that $(u_1,u_2)\in X_{1/2}$ is in the kernel of the operator $(\lambda-A)$ and thus $(u_1,u_2)\equiv0$.
\end{proof}

\section{Proof of Theorem \ref{H2S:SStheo1}}
\textbf{Existence}\\
Fix $1>s>1/p, \ \infty>q>1$ and for $R>0$ define
\begin{align*}
K_R=\{(v_1,v_2)\in W^s_p(\Omega)\times L_q(\partial_1\Omega): v_1,v_2\geq0, \n{v_1}_{W^s_p(\Omega)}+\n{v_2}_{L_q(\partial_1\Omega)}\leq R\}. 
\end{align*}
$K_R$ is a bounded, convex and closed subset of the Banach space $B=W^s_p(\Omega)\times L_q(\partial_1\Omega)$. For $(v_1,v_2)\in K_R$ consider problem \eqref{H2S:SS2D}-\eqref{H2S:SS2Dbcd} with $H(u_1,u_2)$ replaced by $H(v_1,v_2)$ (notice that $v_1(0,\cdot)$ is well defined as $s>1/p$) i.e.
\bs\label{H2S:SS2D'}
\eq{
div(J_h(u_1))+b_1u_1&=0,&& x\in\Omega\label{H2S:SS2DA'}\\
-d\partial^2_{x_1}u_2-c_1u_1+(b_2+c_2+k_2H(v_1,v_2))u_2&=0,&& x\in\partial_1\Omega\label{H2S:SS2DB'}
}
\es
with boundary conditions
\bs\label{H2S:SS2Dbcd'}
\eq{
-J_h(u_1)\nu&=0,&&x\in\partial_0\Omega\\
-J_h(u_1)\nu&=-(c_1+k_1H(v_1,v_2))u_1+c_2u_2+p_1\delta,&&x\in\partial_1\Omega\\
\partial_{x_1} u_2&=0,&&x\in\partial\partial_1\Omega.
}
\es

Using Lemma \ref{H2S:bvp2lem} with
\begin{align*}
\lambda&=\min\{b_1,b_2\}, \ a_{0}=b_1-\lambda, \ \mu_{\Omega}=0, \ \mu_I=p_1\delta,\\ 
a_{11}&=c_1+k_1H(v_1,v_2), &&a_{12}=c_2,\\ 
a_{21}&=c_1, &&a_{22}=b_2-\lambda+c_2+k_2H(v_1,v_2),
\end{align*}
we obtain that problem \eqref{H2S:SS2D'} has the unique solution $(u_1,u_2)=T(v_1,v_2)$ satisfying \eqref{H2S:bvp2est} with $C$ independent of $R$ (since $H$ is bounded on $\mathbb{R}_+^2$). Thus for large $R$ the nonlinear operator $T$ maps $K_R$ into itself. Since $W^1_p(\Omega)\times W^2_q(\partial_1\Omega)$ embeds compactly into $W^s_p(\Omega)\times L_q(\partial_1\Omega)$ the nonlinear operator $T$ is compact. Since $H$ is globally Lipchitz we conclude that $T$ is continuous in the topology of $B$. Thus, using Schauder fixed point theorem, $T$ has a fixed point, which additionally satisfies \eqref{H2S:estimate}.

\textbf{Uniqueness}\\
Assume that $(u_1,u_2),(v_1,v_2)$ are two $W^1_1$ solutions of \eqref{H2S:SS2D}-\eqref{H2S:SS2Dbcd}. Denoting $z_i=u_i-v_i$ for $i=1,2$ we have:

\begin{align*}
div(J_h(z_1))+b_1z_1&=0,&& x\in\Omega\\
-d\partial^2_{x_1}z_2-c_1z_1+(b_2+c_2)z_2+k_2(H(u_1,u_2)u_2-H(v_1,v_2)v_2)&=0,&& x\in\partial_1\Omega
\end{align*}
with boundary conditions
\begin{align*}
-J_h(z_1)\nu&=0,&&x\in\partial_0\Omega\\
-J_h(z_1)\nu&=-c_1z_1-k_1(H(u_1,u_2)u_1-H(v_1,v_2)v_1)+c_2z_2,&&x\in\partial_1\Omega\\
\partial_{x_1} z_2&=0,&&x\in\partial\partial_1\Omega.
\end{align*}
Define
\begin{align*}
D&=(k_1u_1+k_2u_2+b_3)(k_1v_1+k_2v_2+b_3),\\
w_i&=(u_i+v_i)/2, \ i=1,2.
\end{align*}
Notice that
\begin{align*}
u_1v_2-u_2v_1&=z_1(u_2+v_2)/2-z_2(u_1+v_1)/2=z_1w_2-z_2w_1,\\
H(u_1,u_2)u_1-H(v_1,v_2)v_1&=p_3\Big(\frac{u_1}{k_1u_1+k_2u_2+b_3}-\frac{v_1}{k_1v_1+k_2v_2+b_3}\Big)=
\frac{p_3}{D}(k_2(u_1v_2-u_2v_1)+b_3z_1)\\
&=\frac{p_3}{D}((k_2w_2+b_3)z_1-k_2w_1z_2),\\
H(u_1,u_2)u_2-H(v_1,v_2)v_2&=p_3\Big(\frac{u_2}{k_1u_1+k_2u_2+b_3}-\frac{v_2}{k_1v_1+k_2v_2+b_3}\Big)=
\frac{p_3}{D}(-k_1(u_1v_2-u_2v_1)+b_3z_2)\\
&=\frac{p_3}{D}(-k_1w_2z_1+(k_1w_1+b_3)z_2).
\end{align*}
Thus
\begin{align*}
div(J_h(z_1))+b_1z_1&=0,&& x\in\Omega\\
-d\partial^2_{x_1}z_2-(c_1+\frac{k_1k_2p_3w_2}{D})z_1+(b_2+\frac{k_2p_3b_3}{D}+c_2+\frac{k_1k_2p_3w_1}{D})z_2&=0,&& x\in\partial_1\Omega
\end{align*}
with boundary conditions
\begin{align*}
-J_h(z_1)\nu&=0,&&x\in\partial_0\Omega\\
-J_h(z_1)\nu+(\frac{k_1p_3b_3}{D}+c_1+\frac{k_1k_2p_3w_2}{D})z_1-(c_2+\frac{k_1k_2p_3w_1}{D})z_2&=0,&&x\in\partial_1\Omega\\
\partial_{x_1} z_2&=0,&&x\in\partial\partial_1\Omega.
\end{align*}

Hence, using the notation introduced in Lemma \ref{H2S:bvp2lem}, $(z_1,z_2)$ is a $W^1_1$ solution of \eqref{H2S:bvp2},\eqref{H2S:bvp2bcd} with
\begin{align*}
\lambda&=\min\{b_1,b_2\}, \ a_{0}=b_1-\lambda, \ \mu_{\Omega}=0,\ \mu_I=0\\ 
a_{11}&=\frac{k_1p_3b_3}{D}+c_1+\frac{k_1k_2p_3w_2}{D}, &&a_{12}=c_2+\frac{k_1k_2p_3w_1}{D},\\ a_{21}&=c_1+\frac{k_1k_2p_3w_2}{D}, &&a_{22}=b_2-\lambda+\frac{k_2p_3b_3}{D}+c_2+\frac{k_1k_2p_3w_1}{D}.
\end{align*}
Since the nonnegativity of $w_1, w_2$ ensures that assumption \eqref{H2S:dominant} is fulfilled we infer that $z_1=z_2=0$.\\

\section{Proof of Theorem \ref{H2S:SStheo2}}

Since the spaces $W^1_p(\Omega)$ and $W^2_q(\partial_1\Omega)$ are reflexive for $1<p<2, \ 1<q<\infty$ thus, owing to \eqref{H2S:estimate}, there exists a sequence $(h_k)_{k=1}^{\infty}\subset(0,1]$ such that $\lim_{k\to\infty}h_k=0$ and
\bs\label{H2S:conv1}
\eq{
&u_{1}^{h_k}\rightharpoonup w_1 \quad {\rm in} \quad W^1_p(\Omega),\\
&u_{2}^{h_k}\rightharpoonup w_2 \quad {\rm in} \quad W^2_q(\partial_1\Omega).
}
\es

Now we claim that
\bs\label{H2S:conv2}
\eq{
\partial_{x_2}w_1&\equiv0,\label{H2S:ein}\\
%\partial_{x_2}u^*_{1k}&\to 0, && L_p(\Omega)\label{ein}\\
u_{1}^{h_k}(0,\cdot)&\to w_1(0,\cdot)  \quad {\rm in} \quad L_q(\partial_1\Omega),\label{H2S:zwei}\\
u_{2}^{h_k}&\to w_2  \quad\quad\quad {\rm in} \quad C(\overline{I})\label{H2S:drei}.
}
\es

Indeed \eqref{H2S:ein} comes from \eqref{H2S:estimate}. To prove \eqref{H2S:zwei} fix any $1<q<\infty$, then choose $s,p$ such that $1<p<2, 1/p<s<1, s-2/p\geq-1/q$. Then $W^1_p(\Omega)$ embeds compactly into $W^s_p(\Omega)$, the trace operator maps $W^s_p(\Omega)$ into $W^{s-1/p}_p(\partial_1\Omega)$ and the latter space embeds continuously into $L_q(\partial_1\Omega)$. Finally \eqref{H2S:drei} follows from compact embedding of $W^2_q(\partial_1\Omega)$ into  $C(\overline{\partial_1\Omega})$. %Observe that since $u^*_{1\infty}\in W^1_p(\Omega)$ and $\partial_{x_2}u^*_{1\infty}\equiv0$ thus $u^*_{1\infty}\in W^1_p(I)$. 
Choose $v_1\in C^1(\overline{\Omega}), \ v_2\in C^1(\overline{\partial_1\Omega})$, then by \eqref{H2S:weak}
\begin{align*}
\int_{\Omega}[\partial_{x_1}u_{1}^{h_k}\partial_{x_1}v_1+b_1u_{1}^{h_k}v_1]+\int_{\partial_1\Omega}[d\partial_{x_1}u_{2}^{h_k}\partial_{x_1}v_2-c_1u_{1}^{h_k}v_2]=p_1v_1(0),\\
\int_{\partial_1\Omega}[c_1H(u_{1}^{h_k},u_{2}^{h_k})u_{1}^{h_k}v_1-c_2u_{2}^{h_k}v_1+(b_2+c_2H(u_{1}^{h_k},u_{2}^{h_k})v_2)]=0.
\end{align*}
Using \eqref{H2S:conv1} and \eqref{H2S:conv2} we can pass to the limit with $k\to\infty$ and identify that $(w_1,w_2)=(u_{1}^{0},u_{2}^{0})$ is a solution of \eqref{H2S:SS1D}. Finally notice that \eqref{H2S:conv} follows from \eqref{H2S:conv1} and the fact that \eqref{SS1D} has a unique solution, as was proved in \cite{Mal2}.

\section{Acknowledgement}
The author would like to express his gratitude towards his PhD supervisors Philippe Lauren\c{c}ot \\ and Dariusz Wrzosek for their constant encouragement and countless helpful remarks and towards his numerous colleagues for stimulating discussions. \\
The author was supported by the International Ph.D. Projects Programme of Foundation for Polish Science operated within the Innovative Economy Operational Programme 2007-2013 funded by European
Regional Development Fund (Ph.D. Programme: Mathematical Methods in Natural Sciences). \\
This publication has been co-financed with the European Union funds by the European Social Fund.\\
The article is supported by the NCN grant no $2012/05/N/ST1/03115$.\\
Part of this research was carried out during the author's visit to the Institut de Math{\'e}matiques de Toulouse, Universit\'e Paul Sabatier, Toulouse III.

\end{document}